\documentclass{article}

\usepackage{mytemplate}
\usepackage{stix}
\usepackage[alphabetic]{amsrefs}

\newtheorem{thm}{Theorem}
\newtheorem{cor}{Corollary}[section]
\newtheorem{prop}[cor]{Proposition}
\newtheorem{lem}[cor]{Lemma}

\newcommand{\pns}{{\P^{n*}}}
\newcommand{\pet}{\pi_1^{\acute et}}
\newcommand{\fqb}{{\overline{\F}_q}}

\newcommand{\Con}{\mathrm{Con}}
\newcommand{\comp}{\mathrm{comp}}

\newcommand{\fq}{\F_q}
\newcommand{\wreath}{\wr}
\renewcommand{\Im}{\mathrm{im}}

\renewcommand{\k}{{\mathscr{k}}}

\begin{document}

\numberwithin{equation}{section}

\title{Monodromy of Hyperplane Sections of Curves and Decomposition Statistics over Finite Fields}
\author{Alexei Entin}
\maketitle

\abstract{For a projective curve $C\ss\P^n$ defined over $\F_q$ we study the statistics of the $\F_q$-structure of a section of $C$ by a random hyperplane defined over $\F_q$ in the $q\to\ity$ limit. We obtain a very general equidistribution result for this problem. We deduce many old and new results about decomposition statistics over finite fields in this limit. Our main tool will be the calculation of the monodromy of transversal hyperplane sections of a projective curve.}

\section{Introduction} Let $\F_q$ be a finite field, $C\ss\P^n$ a projective curve (by which we mean a closed one-dimensional subvariety) defined over $\F_q$, possibly singular and reducible.
Let $d=\deg C$ be the degree of the curve $C$ in $\P^n$. Let $C_1,\ldots,C_m$ be the $\F_q$-irreducible components of $C$ and $d_i=\deg C_i$ (we have $d=\sum d_i$).

Denote by $\pns$ the dual projective space of hyperplanes in $\P^n$. Consider the open subset
\beq\label{defsec}\mathrm{Sec}(C)=\left\{H\in\pns:\abs{H\cap C}=d\right\}\ss\pns\eeq parametrizing transversal hyperplane sections of $C$. We will view varieties as sets of points over $\fqb$ (the algebraic closure of $\F_q$) and for a variety $X$ defined over $\F_q$ we denote by $X(\F_q)$ its set of $\F_q$-points and by $\Fr_q$ the Frobenius map acting on $X$ (thus $X(\F_q)$ is the set of fixed points of $\Fr_q$). If we take $H\in\Sec(C)(\F_q)$ then the set $H\cap C$ is preserved by the action of $\Fr_q$. In fact $\Fr_q$ permutes each $H\cap C_i$ and since $|H\cap C_i|=d_i$ its cycle structure on each $H\cap C_i$ defines a conjugacy class in the group
$S_{d_1}\times\ldots\times S_{d_m}$ (product of the symmetric groups of degree $d_1,\ldots,d_m$), which we denote by $\Fr(H\cap C)$.

To simplify notation we will denote $$S_{d_1,\ldots,d_m}=S_{d_1}\times\ldots\times S_{d_m}.$$ We will study the distribution of $\Fr(C\cap H)$ in the set of conjugacy classes in $S_{d_1,\ldots,d_m}$ as $H$ varies over $\Sec(C)(\F_q)$. We will do so in the regime where $n,d$ are fixed and $q\to\ity$. Our main result is the following

\begin{thm}\label{thmdec} Let $C\ss\P^n$ be a quasireflexive curve defined over $\F_q$ with components $C_1,\ldots,C_m$ which we assume to be absolutely irreducible of degree $d_1,\ldots,d_m$. Denote $d=\deg C=\sum d_i$. Let $\mathcal{C}$ be a conjugacy class in $S_{d_1,\ldots,d_m}$. Then we have
$$\left|\{H\in\Sec(C)(\F_q)|\Fr(C\cap H)=\mathcal{C}\}\right|=\frac{|\mathcal{C}|}{\left|S_{d_1,\ldots,d_m}\right|}q^n\lb 1+O_{n,d}\lb q^{-1/2}\rb\rb.$$
\end{thm}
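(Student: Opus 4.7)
The plan is to recast the distribution of Frobenius conjugacy classes as a Chebotarev problem for a finite \'etale Galois cover of $\Sec(C)$, compute the geometric monodromy of that cover, and then invoke Deligne's equidistribution.

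First I would construct the \'etale cover $\pi\colon\widetilde{\Sec}(C)\to\Sec(C)$ whose fiber over $H$ is the set of labelings of $H\cap C$ that assign a bijection $\{1,\ldots,d_i\}\to H\cap C_i$ for each component. Transversality of $H$ with $C$ makes $\pi$ finite \'etale of degree $|S_{d_1,\ldots,d_m}|$, and it is Galois with deck group naturally realized as a subgroup of $S_{d_1,\ldots,d_m}$. By construction, for any $H\in\Sec(C)(\F_q)$ the Frobenius conjugacy class on the fiber $\pi^{-1}(H)$ is exactly $\Fr(C\cap H)$. Thus the count in the theorem is, up to the standard translation, a count of Frobenius conjugacy classes in the arithmetic monodromy group of $\pi$.

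The heart of the proof, and the main obstacle, is to show that the geometric monodromy group $G^{\mathrm{geom}}$ of $\pi$ equals the full product $S_{d_1,\ldots,d_m}$; this is exactly what the quasireflexivity hypothesis is designed to deliver. I would proceed in three steps. (i) Transitivity on each $H\cap C_i$ follows from the irreducibility of the incidence variety $\{(H,x)\in\pns\times C_i:x\in H\}$, which is a consequence of the absolute irreducibility of $C_i$ and a projection argument. (ii) To upgrade transitivity to the full $S_{d_i}$, I would study the local monodromy at a generic point of each irreducible component of the branch divisor of $\pi$, which sits inside the union of the dual varieties $C_i^\vee\ss\pns$. A Picard--Lefschetz analysis shows that a simple tangency of $H$ with exactly one $C_i$ produces a transposition in $S_{d_i}$; quasireflexivity is the precise hypothesis that a generic tangent hyperplane to each $C_i$ is such a simple tangency, so these transpositions exist and together with transitivity generate the full $S_{d_i}$. (iii) To see that the factors are independent, so that $G^{\mathrm{geom}}=\prod_i S_{d_i}$ rather than a proper subgroup, I would apply Goursat's lemma: it suffices to exhibit, for each pair $i\neq j$, a hyperplane realizing a transposition in $S_{d_i}$ while acting trivially on $H\cap C_j$, which again is furnished by the existence of hyperplanes tangent to $C_i$ alone.

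Once $G^{\mathrm{geom}}=S_{d_1,\ldots,d_m}$ is established, it must coincide with the arithmetic monodromy (squeezed between $G^{\mathrm{geom}}$ and $S_{d_1,\ldots,d_m}$), and Deligne's equidistribution theorem applies to the $\ell$-adic local system attached to $\pi$ on $\Sec(C)$. Since $\Sec(C)\ss\pns$ is open dense and smooth of dimension $n$ with $|\Sec(C)(\F_q)|=q^n+O_{n,d}(q^{n-1})$, the Weil~II bounds on the cohomology of $\Sec(C)$ with coefficients in the virtual sheaf picking out the class $\mathcal{C}$ yield the main term $\tfrac{|\mathcal{C}|}{|S_{d_1,\ldots,d_m}|}q^n$ with error $O_{n,d}(q^{n-1/2})$, which is precisely the claimed asymptotic.
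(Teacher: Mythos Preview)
Your overall strategy matches the paper's: compute the geometric monodromy of $\PSec(C)\to\Sec(C)$ (equivalently, of your labeling cover), identify it with $S_{d_1,\ldots,d_m}$, deduce that arithmetic and geometric monodromy coincide, and then apply equidistribution. The paper uses a Lang--Weil based Chebotarev theorem rather than Deligne--Katz, but it explicitly remarks that either route works, so that difference is cosmetic. Your step~(iii) is also essentially the paper's argument for independence of the factors: quasireflexivity furnishes, for each $i$, a hyperplane with a single simple tangency to $C_i$ and transversal to every $C_j$, producing a transposition in the $i$-th coordinate and the identity elsewhere.

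The gap is in step~(ii). Transitivity together with a transposition does \emph{not} force the full symmetric group: for any proper divisor $k\mid d_i$ the imprimitive wreath product $S_k\wr S_{d_i/k}\subset S_{d_i}$ is transitive and contains transpositions, yet is not $S_{d_i}$. What is needed is $2$-transitivity (equivalently, primitivity) of the monodromy on each $H\cap C_i$. The paper supplies this by proving that the off-diagonal part of the fibred square, $(X_i\times_Y X_i)\smallsetminus\Delta_{X_i/Y}$ with $X_i=\{(H,P)\in\P^{n*}\times C_i:P\in H\}$ and $Y=\P^{n*}$, is irreducible; this is immediate once one observes that it is a $\P^{n-2}$-bundle over $(C_i\times C_i)\smallsetminus\Delta_{C_i}$. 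With $2$-transitivity in hand, a single transposition does force $S_{d_i}$, and your argument then goes through. Note that you cannot sidestep this by asserting that the global monodromy is generated by the local transpositions around the branch divisor $C_i^*$: in positive characteristic there is no analogue of the statement that $\pi_1$ of the complement of a divisor is generated by inertia, so nothing guarantees a priori that the Picard--Lefschetz transpositions generate the monodromy group.
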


The notion of a quasireflexive curve will be defined in section \ref{secref}, it is a slight generalization of the notion of a reflexive curve which will be recalled below.
Note that under the conditions of the theorem $\left|\Sec(C)(\F_q)\right|=q^n+O_{n,d}\lb q^{n-1}\rb$ since $\Sec(C)$ is an open subset of $\pns$ defined by the nonvanishing of a certain polynomial with degree bounded in terms of $n,d$ (see section \ref{seccheb}), so essentially the theorem asserts the equidistribution of the Frobenius classes of $C\cap H$ in the space of conjugacy classes of $S_{d_1}\times\ldots\times S_{d_m}$.

{\bf Remark 1.} The condition of absolute irreducibility for the $C_i$ in the statement of the theorem is not essential, but the statement needs to be slightly modified without it. See Theorem \ref{cormain} for the precise statement in the more general case.

{\bf Remark 2.} The assertion of Theorem \ref{thmdec} for smooth irreducible plane curves was proved by Bary-Soroker and Jarden \cite{BaJa12} and a slightly weaker assertion (but essentially equivalent) for irreducible plane curves was established in a recent work by Makhul, Schicho and Gallet \cite{MSG_1}.

We will present several applications of Theorem \ref{thmdec}. We give a unified treatment and slight improvements to several results due to Bank, Bary-Soroker, Carmon, Entin, Foster, Jarden, Rudnick and others on decomposition statistics in function fields in the $q\to\ity$ regime (Corrolaries \ref{bh} and \ref{corg}). We will also compute the distribution of the $\F_q$-structure of the intersection of $n$ hypersurfaces in $\P^n$ defined over $\F_q$ (Corollary \ref{corgal}), again in the $q\to\ity$ limit. We will also apply our main geometric result (Theorem \ref{thm1}) to the problem of computing the Galois group of a polynomial with indeterminate coefficients (Corollary \ref{corgal}), generalizing results of Uchida, Smith and Cohen \cite{Coh80}.

To prove Theorem \ref{thmdec} we will need to compute the monodromy of hyperplane sections of a projective curve. As a first step we will need to understand the geometric situation over an algebraically closed field. Let $\k$ be an algebraically closed field and $C\ss\P^n$ a projective curve defined over $\k$. Consider the variety
\beq\label{defpsec}\PSec(C)=\{(H,P)|H\in\Sec(C),P\in C\cap H\}\ss \pns\times\P^n\eeq parametrizing transversal hyperplane sections with a chosen point on the section (pointed transversal sections). The projection map $\phi:\PSec(C)\to\mathrm{Sec}(C)$ is finite \'etale and therefore we may consider the monodromy action of the \'etale fundamental group $\pet(\Sec(C))$ on a fiber of $\phi$. Assume that $C$ has irreducible components $C_1,\ldots,C_m$ of degree $d_1,\ldots,d_m$ respectively. Then a fiber $\phi^{-1}(H)$ for $H\in\Sec(C)$ is in a natural bijection with $C\cap H=\cup_{i=1}^m(C_i\cap H)$ and $\pet(\Sec(C),H)$ acts on each subset $C_i\cap H$ which has cardinality $d_i$, so we obtain a homomorphism $\pet(\Sec(C),H)\to S_{d_1}\times\ldots\times S_{d_m}$, which is defined up to conjugation. We call its image the monodromy of hyperplane sections of $C$ and denote it by $\Mon(\PSec(C)/\Sec(C))$. It is a subgroup of $S_{d_1,\ldots,d_m}$ well-defined and independent of $H$ up to conjugation.

For an irreducible curve $C$ in characteristic zero this monodromy group was studied by Harris \cite{Har81} and shown to be all of $S_d$. If $\k=\C$ we may replace the \'etale fundamental group with the topological fundamental group and the resulting monodromy will be the same. In characteristic $p>0$ it is not always true that the monodromy is all of $S_d$. It was shown by Rathmann \cite{Rat87} and Ballico and Hefez \cite{BaHe86} that if $C$ is irreducible and reflexive its monodromy is $S_d$ (Ballico and Hefez also extended this to higher dimensional varieties). A curve $C\ss\P^n$ is said to be reflexive if the map from its conormal variety to its dual variety is birational. We will recall the precise definitions in the next section. The condition of reflexivity is always satisfied in characteristic 0 and never in characteristic 2. It is usually satisfied in odd characteristic, but there are rare counterexamples. In the next section we will define the notion of a \emph{quasireflexive} curve in characteristic 2. This is a technical condition which is usually, but not always, satisfied.

We generalize the above result of Rathmann-Ballico-Hefez to reducible curves. This generalization can be of independent interest to algebraic geometers. For example it can be used to study general and uniform position properties for hyperplane sections of reducible curves and try to deduce Castelnuovo-type bounds for reducible curves. This was undertaken by Ballico by a different method.

\begin{thm}\label{thm1} Let $C\ss\P^n$ be a curve with components $C_1,\ldots,C_m$ of degree\\
 $d_1,\ldots,d_m$. Assume that $C$ is reflexive or that $\chr\k=2$ and $C$ is quasireflexive. Then
$$\Mon(\PSec(C)/\Sec(C))=S_{d_1}\times\ldots\times S_{d_m}.$$\end{thm}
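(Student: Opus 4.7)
The plan is to show that the monodromy group $G = \Mon(\PSec(C)/\Sec(C)) \subset S_{d_1,\ldots,d_m}$ contains, for each $i$, the entire ``pure factor'' subgroup $K_i := \{1\}\times\cdots\times S_{d_i}\times\cdots\times\{1\}$; since $S_{d_1,\ldots,d_m}$ is generated by the $K_i$, this immediately yields $G = S_{d_1,\ldots,d_m}$. I will obtain each $K_i\subset G$ in two steps: (a) surjectivity of each projection $\pi_i : G \to S_{d_i}$, and (b) the existence of a single element of $G$ which lies in $K_i$ and acts as a transposition in the $i$-th coordinate.

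For (a), note that $\Sec(C)\subset\Sec(C_i)$ is an open immersion of smooth irreducible varieties, so the induced map $\pet(\Sec(C))\to\pet(\Sec(C_i))$ is surjective. The restriction of $\PSec(C)\to\Sec(C)$ to the locus of pointed sections $(H,P)$ with $P\in C_i$ is precisely the pullback of $\PSec(C_i)\to\Sec(C_i)$ along this open immersion, so the image of $\pi_i$ equals $\Mon(\PSec(C_i)/\Sec(C_i))$. The hypothesis (reflexivity, or quasireflexivity in characteristic $2$) is inherited by each irreducible component, so by the Rathmann--Ballico--Hefez theorem this image is $S_{d_i}$.

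For (b), fix $i$ and pick a smooth point $P\in C_i$ that does not lie on any $C_j$ with $j\ne i$. Hyperplanes containing the tangent line to $C_i$ at $P$ form a codimension-$2$ linear subspace $\Lambda_P\subset\pns$, and on $\Lambda_P$ the following are each a proper closed condition: being tangent to $C_i$ at some point other than $P$, having contact of order $\geq 3$ with $C_i$ at $P$, and failing to meet $C_j$ transversally for some $j\ne i$ (the last controlled by reflexivity, which makes each dual variety $C_j^*$ a proper hypersurface in $\pns$, not containing the generic point of $\Lambda_P$ since $P\notin C_j$). A general $H_0\in\Lambda_P$ therefore meets $C_i$ in a single simple tangency at $P$ together with $d_i-2$ further transversal points, and meets each $C_j$ transversally in $d_j$ points all distinct from $P$. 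A small loop in $\Sec(C)$ around $H_0$, taken inside a general line through $H_0$ in $\pns$, then produces a monodromy element that (by the standard Picard--Lefschetz analysis of a node in the incidence correspondence) swaps the two points of $C_i\cap H$ colliding at $P$ and fixes every other intersection point; this element lies in $K_i$ and is a transposition in the $i$-th slot.

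Combining (a) and (b), conjugating a single pure transposition by preimages under $\pi_i$ yields every transposition in $K_i$, giving $K_i\subset G$; running over $i$ completes the proof. The main technical obstacle is step (b) in characteristic $2$: the Picard--Lefschetz conclusion requires a genuine simple tangency, and in characteristic $2$ one must verify that the hypothesis of quasireflexivity (as introduced in the next section) is exactly what rules out universal inseparable tangency and allows the local model $y=x^2$ to persist, so that a loop around $H_0$ really does induce a transposition rather than a trivial element. Everything else reduces to a dimension count in $\pns$ together with a citation of the irreducible reflexive case.
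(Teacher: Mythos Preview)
Your overall strategy matches the paper's: for each $i$, produce a monodromy element that is a pure transposition in the $i$-th block, then use transitivity on that block to generate all of $K_i$. The paper obtains double transitivity of each $\Mon(X_i/Y)$ from the irreducibility of $(X_i\times_Y X_i)\setminus\Delta_{X_i/Y}$ (citing \cite{BaHe86}*{Proposition~2}), whereas you invoke the full Rathmann--Ballico--Hefez theorem on each irreducible $C_i$; either suffices for conjugating one transposition to all of them.

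There are two weak points in your step (b). First, the dimension count on $\Lambda_P$ is unnecessary: quasireflexivity of $C$ is \emph{defined} as the existence, for each component $C_i$, of a tangent hyperplane $H$ with $|H\cap C|=d-1$, and in characteristic $\neq 2$ reflexivity implies quasireflexivity (Proposition~\ref{propquasi}). So the simple tangent is given by hypothesis, not something to be constructed. Second, your justification that $\Lambda_P\not\subset C_j^*$ ``since $P\notin C_j$'' is not valid: nothing prevents an $(n-2)$-dimensional linear space from lying on the hypersurface $C_j^*$. The correct argument, if you insist on the dimension count, is that $\Lambda_P\subset C_j^*$ for generic $P\in C_i$ would force $C_i^*\subset C_j^*$ (since the $\Lambda_P$ sweep out $C_i^*$), hence $C_i^*=C_j^*$; this is precisely what (quasi)reflexivity of the \emph{reducible} curve $C$ excludes.

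Finally, the ``small loop around $H_0$'' language is topological and only applies literally over $\mathbb{C}$; you flag this for characteristic $2$, but the issue arises in every positive characteristic. The paper's substitute is to note that $X$ is a projective bundle over $C$, so the ramified point $(H_0,P)\in X_i$ is smooth and in particular formally irreducible; the transposition then follows from the algebraic local-monodromy argument of \cite{BaHe86}*{Proposition~3}.
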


To prove Theorem \ref{thmdec} we will first compute the geometric monodromy of hyperplane sections of $C$ (i.e. over $\fqb$) via Theorem \ref{thm1}, then derive the arithmetic monodromy over $\F_q$ (Proposition \ref{thmmon}) and finally apply a Chebotarev density theorem (Theorem \ref{thmcheb}) to recover the distribution of $\Fr(H\cap C)$.

We now list our applications, which will be derived in detail in section \ref{secapp}. For a squarefree polynomial $f\in\F_q[t],\deg f=d$ we define its Frobenius class $\Fr(f)$ to be the class in $S_d$ of the Frobenius action on its $d$ roots. It corresponds precisely to the set of degrees appearing in the decomposition of $f$ into irreducibles (over $\F_q$). For squarefree $f_1,\ldots,f_m\in\F_q[t],\deg f_i=d_i$ we define a Frobenius class $\Fr(f_1,\ldots,f_m)\in S_{d_1,\ldots,d_m}$ by taking the product of $\Fr(f_i)$.

\begin{cor}\label{bh} Let $F_1,\ldots,F_m\in\F_q[t,x]\sm\F_q[t,x^p]$ be absolutely irreducible non-associate polynomials, $n$ a natural number. Let $$d_i=\deg_t F_i\lb t,A_0+A_1t+\ldots+A_nt^n\rb,$$ where the $A_i$ are independent variables and $d=\sum d_i$. Denote
$$U_n(\F_q)=\left\{f\in\F_q[t],\deg f=n\mid F_i(t,f(t))\mbox{ squarefree of degree }d_i\right\}.$$ Assume that one of the following holds:
\begin{enumerate}
\item[(i)] $n\ge 3$.
\item[(ii)] $q$ is odd and $n\ge 2$.
\item[(iii)] $\chr\F_q>\max d_i$ and $n\ge 1$.
\end{enumerate}
Let $\mathcal{C}$ be a conjugacy class in $S_{d_1,\ldots,d_m}$.
Then
\begin{multline*}\left|\left\{f\in U_n(\F_q)\mid\Fr\lb F_1(t,f_1(t)),\ldots,F_m(t,f(t))\rb=\mathcal{C}\right\}\right|=\\
=\frac{|\mathcal{C}|}{\left|S_{d_1,\ldots,d_m}\right|}q^{n+1}\lb 1+O_{d}\lb q^{-1/2}\rb\rb.\end{multline*}
In particular the number of $f\in\F_q[t],\deg f=n$ such that all the values \\$F_i(t,f(t))\in\F_q[t],1\le i\le m$ are irreducible is $$\frac{q^{n+1}}{d_1\ldots d_m}\lb 1+O_d\lb q^{-1/2}\rb\rb.$$
\end{cor}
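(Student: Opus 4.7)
The plan is to realize the count as a statistic about transversal hyperplane sections of an auxiliary curve $C\ss\P^{n+1}$ and then invoke Theorem \ref{thmdec}. Let $\iota:\A^2\hookrightarrow\P^{n+1}$ be the locally closed embedding $(t,x)\mapsto(1:t:\cdots:t^n:x)$, let $C_i\ss\P^{n+1}$ be the projective closure of $\iota(\{F_i=0\})$, and set $C=\bigcup_iC_i$. Since the $F_i$ are absolutely irreducible and pairwise non-associate, the $C_i$ are the distinct absolutely irreducible components of $C$, and a direct computation gives $\deg C_i=d_i$.

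Next I would set up the dictionary. Parametrize the open chart $\{B_{n+1}\ne 0\}\ss(\P^{n+1})^*$ by $(A_0,\ldots,A_n)\in\A^{n+1}$, associating the hyperplane $H_f:z_{n+1}=A_0z_0+\cdots+A_nz_n$ to the polynomial $f(t)=A_0+\cdots+A_nt^n$. Then $H_f\cap\iota(\{F_i=0\})$ is $\Fr_q$-equivariantly identified with the roots of $F_i(t,f(t))$, so $\Fr(H_f\cap C)=\Fr(F_1(t,f(t)),\ldots,F_m(t,f(t)))$ whenever $H_f\in\Sec(C)$. The conditions $H_f\in\Sec(C)(\F_q)$ and $A_n\ne 0$, which together cut out $f\in U_n(\F_q)$, define an open subset of $(\P^{n+1})^*$ whose complement has dimension $\le n$; its $O_d(q^n)$ many $\F_q$-points contribute only to the error term.

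The main obstacle is to verify that $C$ is reflexive, or in characteristic $2$ quasireflexive. The hypothesis $F_i\notin\F_q[t,x^p]$ forces $\partial F_i/\partial x\ne 0$, so the projection $\{F_i=0\}\to\A^1_t$ is separable, a prerequisite for any reflexivity argument. Each of the three cases demands a separate analysis of the conormal variety of $C_i$: in (iii), $\chr\F_q>\max d_i$ exceeds all degrees involved and reflexivity follows from the classical characterization of the Gauss map; in (ii), odd characteristic with $n\ge 2$ provides enough room in the embedding for the Gauss map to remain birational; and (i), $n\ge 3$, covers the residual characteristic-$2$ scenario via the weaker notion of quasireflexivity developed earlier in the paper. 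This geometric verification is the essential work.

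Once (quasi)reflexivity is established, Theorem \ref{thmdec} applied to $C\ss\P^{n+1}$ yields $|\{H\in\Sec(C)(\F_q):\Fr(H\cap C)=\mathcal{C}\}|=(|\mathcal{C}|/|S_{d_1,\ldots,d_m}|)q^{n+1}(1+O_d(q^{-1/2}))$, which via the dictionary of the second paragraph becomes the claimed asymptotic for $|\{f\in U_n(\F_q):\Fr=\mathcal{C}\}|$. The simultaneous-irreducibility statement is the specialization to the conjugacy class of a product of full $d_i$-cycles, for which $|\mathcal{C}|/|S_{d_1,\ldots,d_m}|=\prod_i(d_i-1)!/d_i!=1/\prod_id_i$.
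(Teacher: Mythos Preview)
Your overall architecture matches the paper's: the same auxiliary curve $C\subset\P^{n+1}$ (the paper writes the coordinates as $(x,t,t^2,\ldots,t^n)$, which is your $\iota$ up to ordering), the same dictionary between hyperplanes with $B_{n+1}\neq 0$ and polynomials $f$ of degree $\le n$, and the same appeal to Theorem~\ref{thmdec}. The final irreducibility count is also handled the same way.

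The gap is in the third paragraph. You correctly isolate the verification of (quasi)reflexivity of $C$ as ``the essential work,'' but the sentences you offer for cases (i)--(iii) are not arguments. Saying that odd characteristic with $n\ge 2$ ``provides enough room for the Gauss map to remain birational'' is a hope, not a proof; the curve $C$ depends on the arbitrary polynomials $F_i$, and there is no direct control on its Gauss map. The paper's actual mechanism is quite different and you have not found it: one projects $C$ from the point $O=(0{:}1{:}0{:}\cdots{:}0)$ (the ``$x$''-coordinate), observing that since $\partial F/\partial x\neq 0$ the projection is generically \'etale on $C$, so $O$ is not strange. The image of this projection is the rational normal curve $Z_n\subset\P^n$, which is quasireflexive for all $n\ge 2$ by a one-line computation (Lemma~\ref{propnormal}). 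Lemma~\ref{lemproj} --- a nontrivial result proved in \S\ref{seclem} --- then lifts quasireflexivity back to $C$, provided that in characteristic $2$ the image $Z_n$ is not contained in a plane; this fails exactly when $n=2$, which is why case (i) requires $n\ge 3$.

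Case (iii) is also not quite as you describe. The paper uses it only for $n=1$ (the cases $n\ge 2$ being already covered by the projection argument above when applicable). Then $C$ is the plane curve $F(t,x)=0$, and one invokes the fact (due to Hefez) that a non-reflexive irreducible plane curve which is not a line has degree at least $\chr\k$; since $\chr\F_q>\max d_i$, each component is reflexive or a line, and line components can be discarded.
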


We note that the final assertion of the corollary is a function field analogue (in the $q\to\ity$ limit) of the classical Bateman-Horn conjecture on the frequency of prime values of polynomials \cite{BaHo62}. Corollary \ref{bh} was proved by the author in \cite{Ent16} with some additional restrictions on the $F_i$ and $n$ and in some of the cases the proof made use of the classification of finite simple groups. Here we remove the restrictions and eliminate the use of any nontrivial group theory. The corollary was also obtained independently by Carmon by a different method (unpublished note).

Next we obtain another corollary which slightly improves the result of Bank and Foster \cite{BaFo_1} on the decomposition statistics of divisors on curves. The setting is as follows: let $C/\F_q$ be a smooth irreducible projective curve of genus $g$, $E$ a divisor on $C$ defined over $\F_q$.
Let $f\in\F_q(C)$ be a rational function. Define the linear system $I(f,E)=\{(f+g)_0|g\in L(E)\}$, where $(h)_0$ denotes the divisor of zeros of a rational function $h$ and $L(E)$ is the Riemann-Roch space of $E$. Denote
$d=\deg\mathrm{lcm}((f)_\ity,E_\ity)$ ($D_\ity$ denotes the divisor of poles of a divisor or rational function). If we assume additionally that $\deg E\ge 2g$ then (by Riemann-Roch) the generic element $D$ of $I(f,E)$ is the sum of $d$ distinct $\fqb$-points on $C$ with an action of $\Fr_q$ permuting these points. This defines a Frobenius class in $S_d$ which we denote $\Fr(D)$. We denote by $I(f,E)'$ the subset of $D\in I(f,E)$ such that $D$ is squarefree.

\begin{cor}\label{corg} Let $C,E,f,d$ be as above and assume $\deg E\ge 2g+2$ if $q$ is odd and $\deg E\ge 2g+3$ if $q$ is even. Let $\mathcal{C}$ be a conjugacy class in $S_d$. Then we have
$$\card{ \left\{D\in I(f,E)'(\F_q)|\Fr(D)=\mathcal{C}\right\} }=
\frac{|\mathcal{C}|}{|S_d|}q^{\dim I(f,E)}\lb 1+O_{d}\lb q^{-1/2}\rb \rb.$$
In particular the probability that $D$ is irreducible over $\F_q$ is $1/d$ (up to an error of $O_d\lb q^{-1/2}\rb$).
\end{cor}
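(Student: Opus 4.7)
The strategy is to realize $C$ inside some $\P^n$ via a linear system containing both $f$ and a basis of $L(E)$, so that $I(f,E)$ is identified with an affine chart of the dual $\pns$, and then to invoke Theorem \ref{thmdec} for the resulting embedded curve. I will assume throughout that $f\notin L(E)$; the opposite case is degenerate and reduces to a straightforward variant. Put $D_0=\mathrm{lcm}((f)_\ity,E_\ity)$ so that $\deg D_0=d$, and let $V=L(E)+\F_q\cdot f\ss L(D_0)$. Since $\deg E\ge 2g+1$ the linear system $L(E)$ is already very ample, hence so is $V$, and Riemann-Roch gives $\dim_{\F_q}V=\dim L(E)+1=\deg E-g+2$. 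Setting $n=\dim L(E)=\dim I(f,E)$ and fixing a basis $g_0,\ldots,g_{n-1}$ of $L(E)$, the morphism $\phi\colon C\to\P^n$, $P\mapsto[g_0(P):\cdots:g_{n-1}(P):f(P)]$, is a closed embedding.

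A hyperplane $H\ss\P^n$ of the form $a_0x_0+\cdots+a_{n-1}x_{n-1}+y=0$ pulls back under $\phi$ to the divisor $(f+\sum_ia_ig_i)_0\in I(f,E)$, giving a bijection between the affine chart $U=\pns\sm H_\ity$ (where $H_\ity\cong\P^{n-1}$ is the dual hyperplane $y=0$) and $I(f,E)$ itself. Under this bijection $H\in\Sec(\phi(C))$ iff the corresponding $D\in I(f,E)$ is squarefree, and $\Fr(\phi(C)\cap H)=\Fr(D)$. Once I know that $\phi(C)\ss\P^n$ is reflexive when $\chr\F_q\neq 2$ and quasireflexive when $\chr\F_q=2$, Theorem \ref{thmdec} applied to $\phi(C)$ yields
$$\card{\{H\in\Sec(\phi(C))(\fq):\Fr(\phi(C)\cap H)=\mathcal C\}}=\frac{|\mathcal C|}{|S_d|}q^n\bigl(1+O_d(q^{-1/2})\bigr).$$
Restricting to $H\in U(\fq)\cap\Sec(\phi(C))(\fq)$ discards at most $|H_\ity(\fq)|=O(q^{n-1})$ hyperplanes, an error absorbed in the existing $O(q^{n-1/2})$. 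Translating through the bijection gives the claimed count, and specializing $\mathcal C$ to the class of $d$-cycles (of size $(d-1)!$) yields the ``in particular'' statement with probability $1/d$.

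The main obstacle is verifying that $\phi(C)$ is reflexive (respectively quasireflexive) under the precise degree bounds $\deg E\ge 2g+2$ for $q$ odd and $\deg E\ge 2g+3$ for $q$ even. I expect this to follow from a general (quasi)reflexivity criterion for sufficiently ample smooth embeddings, but the characteristic-$2$ case in particular will demand a careful check against the definition of quasireflexivity to be given in Section \ref{secref}: the extra $+1$ in the degree hypothesis is presumably exactly what ensures that quasireflexivity holds in that regime.
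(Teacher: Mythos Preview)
Your plan matches the paper's approach exactly: embed $C$ in $\P^n$ via $(f:f_1:\ldots:f_n)$ with $f_1,\ldots,f_n$ a basis of $L(E)$, identify $I(f,E)$ with an affine chart of $\pns$, and apply Theorem~\ref{thmdec}. The reduction and the final count are handled just as you describe.

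The gap you flag is indeed the entire content of the argument, and your expectation of ``a general (quasi)reflexivity criterion for sufficiently ample smooth embeddings'' is too optimistic as stated. The linear system $V=L(E)+\F_q f$ is \emph{not} a complete linear system on $C$, so Riemann--Roch does not directly control the dimensions of $\{h\in V:\mathrm{ord}_P h\ge k\}$, which is what you would need to rule out flexes and multiple tangencies on $\phi(C)$ directly. The paper's device is precisely to get rid of $f$: project $\phi(C)=C'$ from the point $O$ corresponding to the $f$-coordinate, landing on the curve $C''\ss\P^{n-1}$ embedded by the complete system $|E|$. Now Riemann--Roch applies cleanly: $\dim L(E-3P)<\dim L(E-2P)$ (using $\deg E\ge 2g+2$) shows no point of $C''$ is a flex, and in characteristic~$2$ the further bound $\dim L(E-2P-2Q)=n-5$ (using $\deg E\ge 2g+3$) shows the locus of hyperplanes tangent at two points has dimension $\le n-3<n-2=\dim C''^*$, so the generic tangent is simple. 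This establishes quasireflexivity of $C''$, and then Lemma~\ref{lemproj} (projection from a non-strange point preserves quasireflexivity, with a mild extra hypothesis in characteristic~$2$) lifts it back to $C'$. The degree thresholds $2g+2$ and $2g+3$ enter exactly in these two Riemann--Roch computations, confirming your intuition about the extra~$+1$.
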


In \cite{BaFo_1} Bank and Foster prove a similar statement but with more restrictions on $E$ and $f$, in particular they require $\deg E\ge 6g+3$ in the odd $q$ case. In \cite{BaFo_2} they study the more general problem of correlations of decomposition of divisors (analogue of the Hardy-Littlewood conjecture for number fields). We will improve the result of \cite{BaFo_2} as well (Proposition \ref{prophl}).

Another application of Theorem \ref{thmdec} is to study the statistics of the $\F_q$-structure of the intersection of $n$ random hypersurfaces in $\P^n$ defined over $\F_q$. Let $n,d_1,\ldots,d_n$ be natural numbers and let $U_{d_1,\ldots,d_n}$ be the set of tuples $$\tau=(F_1,\ldots,F_n)\in\fqb[x_0,\ldots,x_n]^n$$ of homogeneous polynomials such that $\deg F_i=d_i$ and the hypersurfaces $H_i$ defined by $F_i=0$ intersect at $d=d_1\cdots d_n$ distinct points. This set is naturally a quasiprojective variety. If we also assume that $\tau\in U_{d_1,\ldots,d_n}(\F_q)$ (i.e. the $F_i$ have coefficients in $\F_q$) we have a Frobenius action on $H_1\cap\ldots\cap H_n$, defining a Frobenius class in $S_d$ which we denote $\Fr(\tau)$.

\begin{cor}\label{corint} Let $\mathcal{C}$ be a conjugacy class in $S_d$. then
$$\card{ \{\tau\in U_{d_1,\ldots,d_n}(\F_q)|\Fr(\tau)=\mathcal{C}\} }=\frac{|\mathcal{C}|}{|S_d|}q^{\dim U_{d_1,\ldots,d_n}}\lb 1+O_{n,d}\lb q^{-1/2}\rb\rb.$$
\end{cor}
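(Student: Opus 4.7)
The plan is to reduce Corollary \ref{corint} to Theorem \ref{thmdec} via the $d_n$-uple Veronese embedding $v=v_{d_n}\colon\P^n\hookrightarrow\P^N$, $N+1=\binom{n+d_n}{n}$. Hyperplanes $H\in\P^{N*}$ correspond bijectively, up to scaling, to nonzero homogeneous forms on $\P^n$ of degree $d_n$. For a tuple $\tau=(F_1,\ldots,F_n)$, set $C_\tau=V(F_1,\ldots,F_{n-1})\ss\P^n$ and $C'_\tau=v(C_\tau)\ss\P^N$; then $\tau\in U_{d_1,\ldots,d_n}$ precisely when the hyperplane $H_\tau$ associated to $F_n$ lies in $\Sec(C'_\tau)$, and in that case $\Fr(\tau)=\Fr(C'_\tau\cap H_\tau)$ as conjugacy classes in $S_d$, since $v$ is defined over $\F_q$.

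I would execute the proof in four steps. First, by a standard Bertini-type argument, restrict to an open dense $\F_q$-subvariety $W$ of the parameter space $\A^D$ (where $D=\sum_{i<n}\binom{n+d_i}{n}$) on which $C_\tau$ is smooth and absolutely irreducible of degree $d_1\cdots d_{n-1}$, and on which its Veronese image $C'_\tau\ss\P^N$ is reflexive, respectively quasireflexive if $\chr\F_q=2$. Second, for each $(F_1,\ldots,F_{n-1})\in W(\F_q)$, apply Theorem \ref{thmdec} to the absolutely irreducible curve $C'_\tau\ss\P^N$ of degree $d$, obtaining
$$\card{\{H\in\Sec(C'_\tau)(\F_q)\mid\Fr(C'_\tau\cap H)=\mathcal{C}\}}=\frac{|\mathcal{C}|}{|S_d|}q^N\lb 1+O_{n,d}\lb q^{-1/2}\rb\rb,$$
and multiply by $q-1$ to pass from hyperplanes to nonzero representative forms $F_n$, obtaining $\frac{|\mathcal{C}|}{|S_d|}q^{\binom{n+d_n}{n}}\lb 1+O_{n,d}\lb q^{-1/2}\rb\rb$ valid $F_n$ per good $(F_1,\ldots,F_{n-1})$. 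Third, summing over $W(\F_q)$, which contains $q^D\lb 1+O\lb q^{-1}\rb\rb$ points, yields the main term $\frac{|\mathcal{C}|}{|S_d|}q^{\dim U_{d_1,\ldots,d_n}}\lb 1+O_{n,d}\lb q^{-1/2}\rb\rb$. Fourth, the contribution from tuples with $(F_1,\ldots,F_{n-1})\not\in W(\F_q)$ is bounded by $\card{(\A^D\sm W)(\F_q)}\cdot q^{\binom{n+d_n}{n}}=O_{n,d}\lb q^{\dim U-1}\rb$, which is absorbed into the error term. The implicit constants are uniform in $\tau$ because $N$ is bounded in terms of $n$ and $d$, so the $O_{N,d}$ from Theorem \ref{thmdec} is an $O_{n,d}$.

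The main obstacle is the (quasi)reflexivity verification in the first step: one must show $v_{d_n}(C_\tau)$ satisfies the hypothesis of Theorem \ref{thmdec} for a Zariski-dense family of $\tau$. Since (quasi)reflexivity is an open condition on $\A^D$, it suffices to produce a single $\tau_0\in\A^D(\fqb)$ with $C_{\tau_0}$ smooth and $C'_{\tau_0}$ (quasi)reflexive. I would attempt this either by direct computation of the Gauss map of $v_{d_n}$ restricted to a conveniently chosen smooth complete intersection $C_{\tau_0}$, exploiting the explicit description of embedded tangent lines of $C'_{\tau_0}$ in $\P^N$ in terms of those of $C_{\tau_0}$ in $\P^n$, or by a degeneration argument reducing to a known reflexive situation such as the rational normal curve. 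Once this geometric ingredient is secured, the rest of the argument is routine Lang--Weil-style bookkeeping.
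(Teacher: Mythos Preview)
Your approach via the $d_n$-uple Veronese embedding and fiber-by-fiber application of Theorem \ref{thmdec} is exactly the paper's strategy. The paper resolves your acknowledged ``main obstacle'' more cleanly than either of your suggested routes: rather than showing quasireflexivity is an open condition and exhibiting a single good $\tau_0$, or degenerating, it proves that the $e$-uple Veronese image of \emph{any} absolutely irreducible curve $C\ss\P^n$ is quasireflexive whenever $e\ge 2$. The argument is a direct dimension count inside $\P^{n*}_e$ (where $\dim\P^{n*}_e=N$): the locus $B$ of degree-$e$ hypersurfaces tangent to $C$ has dimension $N-1$; for fixed smooth $P,Q\in C$ with $P\neq Q$ the locus $B_{P,Q}$ of hypersurfaces tangent at both has dimension $N-4$, and the locus $B_{P,P}$ of hypersurfaces with $I(P,H.C)>2$ has dimension at most $N-3$; letting $P,Q$ range over $C$, the union of these bad loci has dimension at most $N-2<N-1$, so a generic $H\in B$ is a simple tangent. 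This holds uniformly for every $(F_1,\ldots,F_{n-1})$ in the Bertini-good locus, so no separate openness argument for quasireflexivity is needed. One caveat you omit: the dimension count genuinely requires $e\ge 2$ (for $e=1$ tangent lines may be inflectional or mutually coplanar), so one must first reorder the $d_i$ to ensure $d_n\ge 2$; the case $d_1=\cdots=d_n=1$ is trivial.
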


Finally we give a direct application of Theorem \ref{thm1} to computing Galois groups of polynomials with coefficients depending on free variables.

\begin{cor}\label{corgal} Let $\k$ be any field with $\chr\k\neq 2$, $f_0,\ldots,f_n\in\k[t],n\ge 2$ polynomials with $\gcd(f_0,\ldots,f_n)=1$ and $A_1,\ldots,A_n$ free variables. Denote $K=\k(A_1,\ldots,A_n)$ and let $L$ be the splitting field of $$F(t)=f_0(t)+\sum_{i=1}^nA_if_i(t)\in K[t].$$
Assume that the rational functions $f_i/f_j,0\le i,j\le n$ generate $\k(t)$ over $\k$ and
that for some $i,j,k$ the Wronskian $W(f_i,f_j,f_k)$ doesn't vanish. Then $\Gal(L/K)=S_d$, where $d=\max\deg f_i$.
\end{cor}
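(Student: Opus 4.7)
The plan is to realize $F(t)$ as the universal equation of a transversal hyperplane section of an auxiliary curve $C\ss\P^n$, and then to invoke Theorem \ref{thm1}. First form the morphism $\phi:\P^1_\k\to\P^n_\k$ given by $\phi(t)=[f_0(t):\cdots:f_n(t)]$; this is defined on all of $\P^1$ because $\gcd(f_0,\ldots,f_n)=1$. The hypothesis that the ratios $f_i/f_j$ generate $\k(t)$ over $\k$ says that $\phi$ is birational onto its image $C$, so $C$ is a geometrically irreducible curve of degree $d=\max\deg f_i$ (computed as the pullback of a hyperplane class to $\P^1$). A hyperplane $H:B_0x_0+\cdots+B_nx_n=0$ cuts $C$ in the zeros of $\sum B_if_i(t)$, and after setting $A_i=B_i/B_0$ we identify $K=\k(A_1,\ldots,A_n)$ with the function field of the affine chart $B_0\neq 0$ of $\pns$, so $F(t)$ becomes the universal polynomial defining the fiber of the finite \'etale cover $\PSec(C)\to\Sec(C)$. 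Hence $L$ is the function field of the Galois closure of this cover, and $\Gal(L/K)$ is the arithmetic monodromy of $\PSec(C)/\Sec(C)$ over $\k$, a subgroup of $S_d$ containing the geometric monodromy $\Mon(\PSec(C)/\Sec(C))$ over $\bar\k$.

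The second step is to verify that $C$ is reflexive over $\bar\k$; since $\chr\k\neq 2$, it is reflexivity (not merely quasireflexivity) that Theorem \ref{thm1} requires. Reflexivity amounts to the birationality of the projection from the conormal variety $PC\ss\P^1\times\pns$, cut out by $\sum B_if_i(t)=0$ and $\sum B_if_i'(t)=0$, to the dual variety $C^*\ss\pns$. A direct Zariski-tangent-space computation at a point $(t,[B])\in PC$ shows that this projection is an isomorphism on tangent spaces precisely when $\sum B_if_i''(t)\neq 0$. The conormal fiber above a fixed $t$ is an $(n-2)$-plane in $\pns$, so the existence of some such $B$ at generic $t$ is equivalent to $\phi(t),\phi'(t),\phi''(t)\in\bar\k^{n+1}$ being linearly independent at generic $t$. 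This in turn is equivalent to some $3\times 3$ minor $W(f_i,f_j,f_k)$ of the matrix with rows $(f_i),(f_i'),(f_i'')$ being a nonzero rational function in $t$ — which is exactly the stated Wronskian hypothesis.

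Combining the two steps, Theorem \ref{thm1} yields $\Mon(\PSec(C)/\Sec(C))=S_d$ over $\bar\k$, which forces $\Gal(L/K)=S_d$ as desired. The main obstacle is the second step: converting the Wronskian hypothesis into honest reflexivity requires excluding positive-characteristic degeneracies of the Gauss map (such as inseparability of degree $p$), and the combination of $\chr\k\neq 2$ with the non-vanishing of a \emph{three}-variable Wronskian is precisely the algebraic certificate that precludes these.
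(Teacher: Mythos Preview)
Your proof is correct and follows essentially the same route as the paper's (Propositions~\ref{propgal1} and~\ref{propgal2}): reduce to $\bar\k$, identify $\Gal(L/K)$ with $\Mon(\PSec(C)/\Sec(C))$ for $C=\phi(\P^1)$, use the Wronskian hypothesis to certify reflexivity, and apply Theorem~\ref{thm1}. The only cosmetic difference is that the paper phrases the reflexivity check as ``a generic point is not a flex'' and then invokes Proposition~\ref{propquasi}, whereas you phrase it as a tangent-space computation on the conormal projection---these are the same calculation, and the passage from separability of that projection to birationality (which you flag in your final paragraph) is exactly the Segre--Wallace input underlying Proposition~\ref{propquasi}.
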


This implies as a special case the Theorem of J. H. Smith \cite{Smi77} that a trinomial of the form $t^n+At^m+B,0<m<n$ with $A,B$ free variables and $(m,n)=1$ has Galois group $S_n$ unless $p|mn(n-m)$ and also the more general theorem of S. D. Cohen \cite{Coh80}*{Theorem 1} concerning the Galois group of polynomials of the form $$f(t)+Ax^m+Bx^k,f\in\k[t]$$ with certain conditions on $f,m,k$ and $\chr\k$.

The paper is organized as follows: in the next section we review the notion of a reflexive curve and its basic properties and define the notion of quasireflexivity, which differs from reflexivity only in characteristic 2. In section \ref{secthm1} we will prove Theorem \ref{thm1}. In section \ref{seccheb} we will state and prove a Chebotarev density theorem for varieties over $\F_q$. While this theorem is essentially folklore and appears in various versions in the literature we include the statement and proof of the precise version we use. In section \ref{secdec} we prove Theorem \ref{thmdec} in slightly greater generality (without the absolute irreducibility assumption). Then in section \ref{secapp} we will present some applications of our main results including the corollaries listed above. Finally in section \ref{seclem} we will prove Lemma \ref{lemproj}, which is a technical statement about quasireflexivity needed in some of the applications.

{\bf Acknowledgments.} The author would like to thank Edoardo Ballico, Dan Carmon and Lior Bary-Soroker for some useful discussions during the work leading to the present paper. The author would also like to thank Ofir Gorodetsky, Zeev Rudnick, Efrat Bank and Kaloyan Slavov for their comments on previous versions of this paper.

Part of the work leading up to the present paper was conducted during the author's Szeg\"{o} Instructorship at Stanford University.

\section{Reflexivity and quasireflexivity of projective curves}
\label{secref}

In this section we recall the basic properties of reflexive curves that we will need. More details can be found in \cite{Kle85}.
We will also define the notion of \emph{quasireflexivity}, which is more useful in characteristic 2. Let $\k$ be an algebraically closed field.
Let $C\ss\P^n$ be a projective curve, by which we mean a closed subvariety of $\P^n$ of dimension 1 (possibly reducible and singular). We will assume throughout this section that $n\ge 2$.
The \emph{conormal variety} of $C$ is defined to be the Zariski closure of the set
$$\{(P,H)\in \P^n\times\pns| H\mbox{ is tangent to }C\mbox{ at a smooth point }P\}.$$
We denote the conormal variety by $\Con(C)$. It is a subvariety in $\P^n\times\pns$ of dimension $n-1$.

If no component of $C$ is a line then the image of the projection of $\Con(C)$ to $\pns$
is an $(n-1)$-dimensional variety called the \emph{dual variety} of $C$ and denoted $C^*$. The projection $\Con(C)\to C^*$ is generically finite.
The curve $C$ is called \emph{reflexive} if the map $\Con(C)\to C^*$ is birational. In this case if we form the dual of $C^*$ by the same
recipe we obtain $C$ itself. If $C$ has the irreducible components $C_1,\ldots,C_m$ then $\Con(C)=\cup_{i=1}^m\Con(C_i)$ and $C^*=\cup_{i=1}^m C_i^*$.
The curve $C$ is reflexive iff each $C_i$ is reflexive.

The \emph{Segre-Wallace criterion} \cite{Wal56} asserts that an irreducible curve $C$ is reflexive iff the field extension $\k(\Con(C))/\k(C^*)$ is separable. In particular in characteristic 0 every curve is reflexive. On the other hand in characteristic 2 no curve is reflexive \cite{Kat73}.
The \emph{Hefez-Klein generic order of contact theorem} \cite{HeKl85} asserts that for an irreducible non-reflexive curve $C$ which is not a line the following holds:
for a generic tangent $H$ to $C$ at a generic point $P$ the order of contact (multiplicity of intersection)
$I(P, H.C)$ equals the degree of inseparability of the extension $\k(\Con(C))/\k(C^*)$ (i.e. the degree of the largest purely inseparable subextension).

{\bf Definition.} We say that a curve $C\ss\P^n$ of degree $d$ is \emph{quasireflexive} if every component $C_i$ of $C$ has a tangent hyperplane $H$ such that
$|H\cap C|=d-1$, i.e. the tangency is as simple as possible and all other intersections are transversal. We call such an $H$ a \emph{simple tangent hyperplane}. If every component of $C$ has a simple tangent hyperplane then a generic tangent hyperplane to $C$ is simple. In the case of plane curves Bary-Soroker and Jarden \cite{BaJa12} called such curves "characteristic-0-like".

A point $P\in C$ is called a \emph{flex} (or inflection point) if the generic tangent hyperplane $H$ to $C$ at $P$ satisfies $I(P, H.C)>2$. We caution the reader that some authors define a flex differently in positive characteristic, but we stick to this classical terminology. The curve $C$ is quasireflexive iff a generic point $P\in C$ is not a flex and if a generic $H\in C^*$ is tangent at a single point. It follows from this that $C$ is quasireflexive iff each component $C_i$ is quasireflexive and $C_i^*\neq C_j^*$ for $i\neq j$.

\begin{prop}\label{propquasi} Assume $\chr\k\neq 2$. The following conditions are equivalent:
\begin{enumerate}\item[(i)] $C$ is quasireflexive.
\item[(ii)] $C$ is reflexive.
\item[(iii)] A generic point $P\in C$ is not a flex.
\end{enumerate}\end{prop}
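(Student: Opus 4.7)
The plan is to establish the cyclic implications (i) $\Rightarrow$ (iii) $\Rightarrow$ (ii) $\Rightarrow$ (i), using the characterizations of quasireflexivity recalled just before the proposition statement and the Hefez--Klein generic order of contact theorem. Line components are trivially reflexive and admit an obvious ``simple tangent'' treatment, so I work with non-line components throughout.

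The implication (i) $\Rightarrow$ (iii) is immediate: the text already records that $C$ is quasireflexive iff a generic $P\in C$ is not a flex and a generic $H\in C^*$ is tangent at a single point.

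For (iii) $\Rightarrow$ (ii), I argue component by component. Fix $C_i$ (not a line) and assume for contradiction that $C_i$ is not reflexive. By the Segre--Wallace criterion the extension $\k(\Con(C_i))/\k(C_i^*)$ is inseparable, and by Hefez--Klein the generic order of contact $I(P,H.C_i)$ at a generic tangent equals the inseparability degree of this extension. That degree is a positive power of $\chr\k$, hence at least $3$ since $\chr\k\neq 2$. So a generic point of $C_i$ would be a flex, contradicting (iii). Therefore every $C_i$ is reflexive and $C$ is reflexive.

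For (ii) $\Rightarrow$ (i), by the component-wise characterization in the text it suffices to show that each $C_i$ is quasireflexive and that $C_i^*\neq C_j^*$ for $i\neq j$. Biduality $(C_i^*)^* = C_i$ for reflexive $C_i$ yields the latter: if $C_i^*=C_j^*$, taking duals forces $C_i=C_j$. For the former, fix $i$. Reflexivity makes $\Con(C_i)\to C_i^*$ birational, so a generic $H\in C_i^*$ is tangent to $C_i$ at a unique point $P$; since $\k(\Con(C_i))/\k(C_i^*)$ is separable, the generic order of contact at $P$ is $2$ (the ``reflexive'' direction of Hefez--Klein, which follows from biduality plus birationality), so $P$ is not a flex. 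Finally, $C_i^*\neq C_j^*$ and both are irreducible hypersurfaces in $\pns$, so a generic $H\in C_i^*$ avoids $C_j^*$ for $j\neq i$ and meets each $C_j$ transversally in $d_j$ distinct points. Thus $|H\cap C|=d-1$, so $H$ is a simple tangent hyperplane and $C_i$ is quasireflexive. The main delicate point is this last ``reflexive implies generic order of contact $2$'' step, which is not stated explicitly in the excerpt but should fall out cleanly from biduality combined with the birationality of $\Con(C_i)\to C_i^*$.
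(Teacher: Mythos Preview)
Your proof is correct and follows essentially the same route as the paper. The paper also uses the Hefez--Klein/Segre--Wallace combination for (ii)$\Leftrightarrow$(iii), cites the preceding paragraph for (i)$\Rightarrow$(iii), and then for (ii)$\Rightarrow$(i) argues that birationality of $\Con(C)\to C^*$ forces a generic tangent to be tangent at a single point; your version unpacks this last step more explicitly (invoking biduality to get $C_i^*\neq C_j^*$ and checking transversality on the other components), but the substance is identical. Your flagged ``delicate point'' that reflexive implies generic order of contact $2$ is exactly what the paper is also silently using when it asserts (ii)$\Rightarrow$(iii) from the generic order of contact theorem.
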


\begin{proof} The generic order of contact theorem combined with the Segre-Wallace criterion imply (assumint $\chr\k\neq 2$) that (ii) is equivalent to (iii). By the paragraph preceding the proposition (i) implies (iii). It remains to show that if $C$ is reflexive it is quasireflexive. Since $\Con(C)\to C^*$ is birational it is generically one-to-one, so the generic tangent to $C$ is tangent at a single point. By the implication (ii)$\rightarrow$(iii) a generic $P\in C$ is not a flex, so $C$ is quasireflexive.
\end{proof}

A point $O\in\P^n$ is called a \emph{strange point} for the curve $C$ if every tangent line to one of the components $C_i$ contains $O$. Equivalently, every tangent hyperplane $H$ to $C_i$ contains $O$. The point $O$ is strange iff the projection $\P^n\sm\{O\}\to\P^{n-1}$ from $O$ is not generically \'etale on $C$. A curve is called \emph{strange} if it has a strange point. Note that the strange point is not required to lie on $C$.

The following lemma will be useful to establish quasiprojectivity for the curves appearing in our applications. We delegate the proof to section \ref{seclem}.

\begin{lem}\label{lemproj} Let $C\ss\P^n,n\ge 3$ be a curve and $O\in\P^n$ a non-strange point for $C$.
Let $\pi:\P^n\sm \{O\}\to\P^{n-1}$ denote the projection from $O$. Assume that the image $\pi(C)$ is quasireflexive. If $\chr\k=2$ assume additionally that no component of $\pi(C)$ is contained in a (two-dimensional) plane.
Then $C$ is quasireflexive.\end{lem}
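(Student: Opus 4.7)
The plan is to verify the two defining conditions of quasireflexivity for $C$: (1) each component $C_i$ of $C$ admits a simple tangent hyperplane in $\P^n$, and (2) the dual varieties $C_i^*$ are pairwise distinct.

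For (2), let $\Lambda_O \subset \pns$ be the $(n-1)$-dimensional linear subvariety parametrizing hyperplanes through $O$, naturally identified with $(\P^{n-1})^*$ via $\pi$. Writing $C_i' = \pi(C_i)$, one checks (using that $\pi|_{C_i}$ is generically étale since $O$ is non-strange) that $C_i^* \cap \Lambda_O = (C_i')^*$ under this identification. Quasireflexivity of $\pi(C)$ yields $(C_i')^* \neq (C_j')^*$ for $i \neq j$, and hence $C_i^* \neq C_j^*$.

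For (1), fix a component $C_i$ and, by quasireflexivity of $\pi(C)$, a simple tangent hyperplane $H'$ to $C_i'$ at a smooth non-flex point $P' \in C_i'$. Since $O$ is non-strange, $\pi|_{C_i}$ is generically étale, so we may pick $P \in \pi^{-1}(P') \cap C_i$ where $\pi$ is étale (moving $P'$ slightly among non-flex points of $C_i'$ if necessary). Projection from $O$ carries the osculating $2$-plane of $C_i$ at $P$ onto that of $C_i'$ at $P'$ (both are determined by second-order data at the point, and $\pi$ is affine-linear), so the latter being $2$-dimensional forces $P$ to be a non-flex of $C_i$.

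Next, consider the pencil $\mathcal{P} \subset \pns$ of hyperplanes containing the tangent line $T_P C_i$; it is an $(n-2)$-dimensional linear subvariety contained in $C_i^*$. I claim a generic $H \in \mathcal{P}$ is a simple tangent of $C$: the contact of $H$ with $C_i$ at $P$ is exactly $2$ since $P$ is not a flex, and the locus of hyperplanes tangent to $C$ at some point $\neq P$ is contained in $C^*$ of dimension at most $n-1$, so its intersection with $\mathcal{P}$ has expected dimension $\leq n-3 < n-2 = \dim \mathcal{P}$; a similar count ensures transversality at the remaining intersection points. Combined with the resulting intersection-multiplicity bookkeeping, this gives $|H \cap C| = \deg C - 1$.

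The main obstacle is verifying that these dimension counts are sharp, i.e., that $\mathcal{P}$ is not entirely contained in the locus of hyperplanes with additional tangencies or higher-order contact. In odd characteristic the non-flex lifting argument suffices. In characteristic $2$ one needs the additional hypothesis that no component of $\pi(C)$ lies in a $2$-plane of $\P^{n-1}$: since the projection of a $2$-plane is at most $2$-dimensional, this lifts to $C_i$ not being contained in any $2$-plane of $\P^n$, ruling out the plane-curve-in-characteristic-$2$ pathologies in which every hyperplane of $\mathcal{P}$ would have a second tangency.
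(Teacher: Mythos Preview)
Your odd-characteristic argument is essentially correct and matches the paper's: once a generic $P\in C$ is shown not to be a flex (your osculating-plane lifting is equivalent to the paper's intersection-multiplicity identity $I(P,H.C)=I(\pi(P),\pi(H).C')$ for $O\in H$), Proposition~\ref{propquasi} finishes the job.

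The characteristic~$2$ case has a genuine gap. You correctly identify the obstacle---that the pencil $\mathcal{P}$ of hyperplanes through $T_PC_i$ might lie entirely in the locus of hyperplanes with a second tangency---but your resolution is insufficient. Knowing that $C_i$ (or $C_i'$) is not contained in a $2$-plane does \emph{not} by itself exclude this. What must actually be ruled out is that for generic $P\in C_i$ either (i) some other smooth $Q\in C$ has $L_Q=L_P$, or (ii) for some component $C_j$ and \emph{all} smooth $P\in C_i$, $Q\in C_j$ the tangent lines $L_P,L_Q$ are coplanar. The paper handles this by first showing $C'$ (hence $C$) is not strange, using that no component of $C'$ is a conic (Lemma~\ref{lemstrange}), and then pushing (i) and (ii) down to $C'$: (i) would force a second tangency for generic tangent hyperplanes of $C_i'$, contradicting quasireflexivity of $C'$; (ii) with $C_i'=C_j'$ would make $C_i'$ strange by Lemma~\ref{lem0}, while for $C_i'\neq C_j'$ one exhibits directly a hyperplane in $C_i^*\setminus C_j^*$ by pulling back from $(C_i')^*\setminus(C_j')^*$. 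The ``no component of $\pi(C)$ in a plane'' hypothesis enters precisely to feed Lemmas~\ref{lemstrange} and~\ref{lem0}, not through the mechanism you sketch.

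A smaller point: in your argument for (2) you invoke $(C_i')^*\neq(C_j')^*$ for $i\neq j$, but the paper explicitly notes that distinct components $C_i,C_j$ of $C$ may project to the \emph{same} curve $C_i'=C_j'$, so this step fails as written. In fact (2) is redundant once (1) is properly established: a simple tangent hyperplane to $C_i$ is by definition transversal to every $C_j$ with $j\neq i$, hence lies in $C_i^*\setminus C_j^*$.
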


\section{Proof of Theorem \ref{thm1}}\label{secthm1}

For the basic theory of the \'etale fundamental group and its monodromy action we refer the reader to \cite{Mil80}*{\S I.5} and \cite{Mur67}. Throughout this section we work over an arbitrary algebraically closed field $\k$. Let $\phi:X\to Y$ be a generically \'etale morphism of varieties and assume that $Y$ is irreducible. Over some open $U\ss Y$ the map $\phi$ is finite \'etale and $\pet(U)$ acts on the fiber $\phi^{-1}(y)$ over any point $y\in U$. This is the monodromy action, which is well defined and independent of $U,y$ up to conjugation in $\pet(U)$. We denote by $\Mon(X/Y)$ the monodromy group which is the quotient of $\pet(U)$ by the subgroup fixing one (and therefore every) fiber of $\phi$.
If $X$ has irreducible components $X_1,\ldots,X_m$ with $\deg\phi|_{X_i}=d_i$ then $\Mon(X/Y)$ can be viewed as a subgroup of $S_{d_1,\ldots,d_n}$, well-defined up to conjugation. The surjective map $\Mon(X/Y)\to\Mon(X_i/Y)$ is induced by the projection
$S_{d_1,\ldots,d_m}\to S_{d_i}$. We have
\beq\label{mongal}\Mon(X/Y)\cong\Gal(L/\k(Y))\eeq where $L$ is the Galois closure of the composite of $\k(X_i),1\le i\le m$ (for a variety $X$ we denote by $\k(X)$ its field of rational functions).

The following proposition generalizes \cite{BaHe86}*{Propositions 2,3} to the reducible case.

\begin{prop} Let $Y$ be an irreducible variety, $X$ a variety with irreducible components $X_1,\ldots,X_m$ and $\phi:X\to Y$ a generically \'etale morphism. Denote $\phi_i=\phi|_{X_i},d_i=\deg\phi_i$. Assume that
\begin{itemize}
\item[(i)] For each $i$ the component $(X_i\times_Y X_i)\sm\Delta_{X_i/Y}$ of the fiber product $X_i\times_Y X_i$ is irreducible ($\Delta_{X_i/Y}$ denotes the diagonal component in $X_i\times_Y X_i$).
\item[(ii)] For each $i$ there exists a smooth point $y\in Y$ such that $\phi_j$ is \'etale over $y$ for all $j\neq i$ and $\phi_i^{-1}(y)=\{x',x_1,\ldots,x_{d_i-2}\}$ such that $\phi_i$ is \'etale at $x_1,\ldots,x_{d_i-2}$ and $X_i$ is formally irreducible at $x'$ (i.e. the completion of its local ring is integral).
\end{itemize}
Then we have $\Mon(X/Y)=S_{d_1,\ldots,d_m}$.
\end{prop}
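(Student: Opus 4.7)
The plan is to realize $G:=\Mon(X/Y)\subseteq S_{d_1,\ldots,d_m}$, denote by $\pi_i\colon G\to S_{d_i}$ the $i$-th coordinate projection, and set $N_i:=\ker\bigl(G\to\prod_{j\ne i}S_{d_j}\bigr)$; then $N_i$ is normal in $G$ and injects into $S_{d_i}$ via $\pi_i$. It suffices to prove $\pi_i(N_i)=S_{d_i}$ for each $i$, since then $G\supseteq\prod_i N_i=\prod_i S_{d_i}$ and equality follows from the ambient inclusion.

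First I would show $\pi_i(G)=\Mon(X_i/Y)=S_{d_i}$. Condition (i), irreducibility of the off-diagonal component of $X_i\times_Y X_i$, corresponds via the standard dictionary between irreducible components of fiber products and orbits of the monodromy on pairs of sheets (see \cite{BaHe86}*{Proposition 2} or \cite{Har81}) to $2$-transitivity of $\Mon(X_i/Y)$ on a fiber of $\phi_i$. From (ii) I get a smooth point $y\in Y$ over which the fiber of $\phi_i$ has $d_i-1$ geometric points: $d_i-2$ étale and one unibranch point $x'$ carrying the remaining degree $2$. The completed local ring $\widehat{\mathcal{O}}_{X_i,x'}$ is therefore a totally ramified degree-$2$ extension of the regular local ring $\widehat{\mathcal{O}}_{Y,y}$, so the local inertia at $y$ acts on $\phi_i^{-1}(y)$ as a single transposition. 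A $2$-transitive subgroup of $S_{d_i}$ containing a transposition is all of $S_{d_i}$, as the $2$-transitive action conjugates the given transposition onto every transposition.

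Next I would reuse the same local-inertia element as a lift inside $N_i$. Because (ii) forces $\phi_j$ to be étale at $y$ for every $j\ne i$, the decomposition group at $y$ acts trivially on each fiber $\phi_j^{-1}(y)$ while acting by the transposition above on $\phi_i^{-1}(y)$. Hence $G$ contains an element $\sigma_i$ that is a transposition in its $i$-th coordinate and the identity in every other coordinate; in particular $\sigma_i\in N_i$. Since $N_i$ is normal and $\pi_i(G)=S_{d_i}$, the $G$-conjugates of $\sigma_i$ remain in $N_i$ and project under $\pi_i$ to every $S_{d_i}$-conjugate of $\pi_i(\sigma_i)$, i.e.\ to every transposition of $S_{d_i}$. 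Transpositions generate $S_{d_i}$, so $\pi_i(N_i)=S_{d_i}$ and the proof is finished.

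The delicate step, which I expect to be the main obstacle, is the precise identification of the local-inertia contribution at $y$ as the element ``transposition on the $i$-th component, identity on every other component''. Smoothness of $y$ together with formal irreducibility at $x'$ must be leveraged to ensure the local extension at $x'$ is of degree exactly $2$ and totally ramified (in particular that no inseparable or higher-cycle pathology occurs), while the étaleness of the $\phi_j$ at $y$ for $j\ne i$ must be used to verify that the \emph{same} inertia generator acts trivially on every other component's fiber. Matching up these local pictures across all components simultaneously, so that a single element of $G$ realises $\sigma_i$, is the technical heart of the argument and extends the analogous local-monodromy step from the irreducible case of Ballico--Hefez.
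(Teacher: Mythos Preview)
Your proposal is correct and follows essentially the same route as the paper: condition (i) yields double transitivity of each $\Mon(X_i/Y)$ via \cite{BaHe86}*{Proposition 2}, condition (ii) produces an element of $\Mon(X/Y)$ acting as a transposition on the $i$-th fiber and trivially on all others (the local-inertia argument you flag as the delicate step is precisely the adaptation of \cite{BaHe86}*{Proposition 3} the paper invokes), and conjugation by the doubly transitive projection forces the full factor $S_{d_i}$ to lie inside $G$. Your packaging of the final group-theoretic step through the normal subgroups $N_i$ is slightly more explicit than the paper's one-line appeal to double transitivity, but the substance is identical.
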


\begin{proof} By \cite{BaHe86}*{Proposition 2} condition (i) implies that the action of $\Mon(X_i/Y)$ on the respective fiber is doubly transitive. By \cite{BaHe86}*{Proposition 3} condition (ii) implies that each $\Mon(X_i/Y)$ contains a transposition. In fact the proof of that proposition can be adapted with insignificant changes to the reducible case, and under condition (ii) it implies a stronger fact: for each $i$, $\Mon(X/Y)$ contains an element acting by transposition on the fiber of $\phi_i$ and fixing the fibers of $\phi_j,j\neq i$. By double transitivity we see that $\Mon(X/Y)$ contains the full group $S_{d_i}\ss S_{d_1,\ldots,S_n}$ of permutations of a fiber of $\phi$ leaving the fibers of $\phi_j,j\neq i$ fixed. This holds for each $i$, so $\Mon(X/Y)=S_{d_1,\ldots,d_m}$.
\end{proof}

Now we are ready to prove Theorem \ref{thm1}. Let $C\ss\P^n$ be a quasireflexive curve with components $C_1,\ldots,C_m$. Denote $\deg C=d,\deg C_i=d_i$. We want to apply the last proposition to $$X=\{(H,P)\in\pns\times\P^n|P\in H\cap C\},Y=\pns$$ and the projection map $\phi:X\to \pns=Y$. Note that $X$ is a projective bundle over $C$ via the projection $(H,P)\mapsto P$. The projection map $\phi$ is generically \'etale, since over the open subset $\Sec(C)$ it restricts to the projection $\PSec(C)\to\Sec(C)$ (recall that these varieties are defined by (\ref{defsec}),(\ref{defpsec})). We have $\Mon(\PSec(C)/\Sec(C))=\Mon(X/Y)$. We denote $$X_i=\{(H,P)\in\pns\times\P^n|P\in H\cap C_i\},$$ these are the irreducible components of $X$.

In the proof of the main theorem of \cite{BaHe86} it is shown that $X_i\times_Y X_i\sm\Delta_{X_i/Y}$ is irreducible for any curve $C\ss\P^n$ and in fact for a variety of any dimension. We note that this does not require the quasireflexivity assumption. It remains to verify condition (ii) of the proposition. By the quasireflexivity assumption for each $i$ there is a hyperplane $H\in\pns=Y$ tangent to $C_i$, such that $\card{H\cap C}=d-1$. In this case the map $\phi$ is \'etale at all but one of the points of $X$ lying over $H$, namely the point $x=(H,P)$ where $P$ is the point of tangency of $H$ to $C_i$. Since $H$ is generic we may assume that $P$ is a smooth point on $C_i$ and since $X$ is a projective bundle over $C$ the point $x\in X$ is smooth and therefore formally irreducible.
Thus condition (ii) is satisfied and consequently we have $\Mon(X/Y)=S_{d_1,\ldots,d_m}$, which concludes the proof.

\section{Decomposition statistics and the Chebotarev density theorem}
\label{seccheb}

Let $\F_q$ be a finite field, $\phi:X\to Y$ a finite \'etale morphism of $\F_q$-varieties defined over $\F_q$, with $Y$ geometrically irreducible (i.e. irreducible over $\fqb$). Denote $d=\deg\phi$. For a point $y\in Y(\F_q)$ the fiber $\phi^{-1}(y)$ is a finite \'etale $\F_q$-scheme of order $d$. Geometrically this fiber can be described as a set of $d$ points over $\fqb$ with a Frobenius action, which determines a conjugacy class in $S_d$. For a fixed $d$ one may wish to study the distribution of this class as $y$ ranges over $Y(\F_q)$. The main tool for studying this distribution is a Chebotarev density theorem for varieties over $\F_q$, which will be stated below.

Let $\phi:X\to Y$ be as above and assume that $Y$ is normal. The \'etale fundamental group $\pet(Y)$ acts on a fiber over a geometric point, which is a set of $d$ geometric points. We denote by $\Mon(X/Y)\ss S_d$ the corresponding monodromy group, which is only well-defined up to conjugation. The \emph{geometric monodromy} is the group $\Mon^g(X/Y)=\Mon(X\times\fqb/Y\times\fqb)$ and is naturally a subgroup of $\Mon(X/Y)$, the latter also being referred to as the \emph{arithmetic monodromy}. There is a natural exact sequence
\beq\label{monseq}1\to\Mon^g(X/Y)\to\Mon(X/Y)\to\Gal(\F_{q^\nu}/\F_q)\to 1,\eeq
where $\F_{q^\nu}$ is the minimal field such that $\Mon(X\times \F_{q^\nu}/Y\times \F_{q^\nu})=\Mon^g(X/Y)$.

Let $x\in Y(\F_q)$ be a point. We have a map $$\Gal\lb\fqb/\F_q\rb\cong\pet(y)\to\Mon(\phi^{-1}(y)/y)\hookrightarrow\Mon(X/Y),$$ which is well-defined up to conjugation in $\Mon(X/Y)$. The conjugacy class of the image of the Frobenius $\Fr_q$ under this map is called the \emph{Frobenius class} of $\phi$ at the point $y$ and denoted $\Fr(y)$ ($\phi$ is implied). Its cycle structure corresponds to the $\F_q$-structure of the fiber $\phi^{-1}(y)$ as described above. The image of $\Fr(y)$ under the second map in (\ref{monseq}) always equals the Frobenius map $\Fr_q\in\Gal(\F_{q^\nu}/\F_q)$.
We want to study the distribution of $\Fr(y)$ in the set of conjugacy classes of $\Mon(X/Y)$ as $y$ varies over $Y(\F_q)$ for $q$ large and $V$ of bounded complexity, a notion that we define next.

Let $X\ss\P^n$ be a locally closed set defined over an algebraically closed field $\k$. We define the complexity of $X$ to be $\max(n,D)$ where $D$ is the minimal number such that $X$ can be defined by
$$X=\{x\in\P^n|F_1(x)=\ldots=F_k(x)=0,G_1(x)\cdots G_m(x)\neq 0\}$$ with $F_i,G_j\in\k[x_0,\ldots x_n],\deg F_i,\deg G_j\le D$. We denote the complexity of $X$ by $\comp(X)$. We define the complexity of a morphism of locally closed sets $\phi:X\ss\P^N\to Y\ss\P^n$ (denoted $\comp(\phi)$) to be the complexity of its graph, viewed as a locally closed set in $\P^N\times\P^n\ss\P^{(N+1)(n+1)-1}$ (Segre embedding). Most standard algebro-geometric constructions when performed on quasiprojective varieties and morphisms thereof of complexity $\le C$ yield varieties of complexity $O_C(1)$. Note that this bound is independent of the base field. This includes taking irreducible components, taking images and fibers of morphisms and the formation of fiber products as well conormal and dual varieties. The easiest way to show this is by using ultraproducts as in \cite{BGT11}*{Appendix A}, which gives an ineffective (but finite) bound. In principle for every specific construction it is possible to obtain an effective bound for the complexity of the result in terms of the complexity of the input by a constructive algebraic argument, but we will not pursue this here. When we work over a non-algebraically closed field we will define complexity via the algebraic closure.

The Lang-Weil bound \cite{LaWe54} asserts that for an irreducible variety $X/\F_q$ we have $$\abs{X(\F_q)}=q^{\dim X}\lb 1+O_{\comp(X)}\lb q^{-1/2}\rb\rb.$$ Now we state the uniform explicit Chebotarev density theorem for varieties over finite fields.

\begin{thm}\label{thmcheb} Let $\phi:X\to Y$ be a finite \'etale morphism of quasiprojective varieties $X\ss\P^n,Y\ss\P^M$ defined over $\F_q$, with $Y$ absolutely irreducible. Denote $d=\deg\phi$. Let $\mathcal{C}$ be a conjugacy class in $\Mon(X/Y)$ mapped to $\Fr_q$ in (\ref{monseq}) and $\nu$ the number appearing in (\ref{monseq}). Then
$$\abs{\{y\in Y(\F_q)|\Fr(y)=\mathcal{C}\}}=\frac{\nu|\mathcal{C}|}{\abs{\Mon(X/Y)}}q^{\dim Y}\lb 1+O_{\comp(\phi)}\lb q^{-1/2}\rb\rb$$
\end{thm}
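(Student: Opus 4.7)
The plan is to carry out the standard character-theoretic proof of Chebotarev, combining the Grothendieck--Lefschetz trace formula with Deligne's purity. I first form the Galois closure $Z \to Y$ of $\phi$, which is a finite étale Galois cover with group $G = \Mon(X/Y)$, and for each irreducible representation $\rho$ of $G$ let $\mathcal{F}_\rho$ denote the corresponding lisse $\ell$-adic sheaf on $Y$ arising from $\pet(Y) \to G \to \mathrm{GL}(V_\rho)$. Using the character orthogonality relation
$$\mathbf{1}_{\mathcal{C}}(g) = \frac{|\mathcal{C}|}{|G|}\sum_{\rho \in \widehat{G}}\overline{\chi_\rho(\mathcal{C})}\,\chi_\rho(g),$$
the quantity to estimate becomes $\frac{|\mathcal{C}|}{|G|}\sum_\rho \overline{\chi_\rho(\mathcal{C})}\sum_{y \in Y(\F_q)}\chi_\rho(\Fr(y))$, and Grothendieck--Lefschetz rewrites each inner sum as $\sum_i (-1)^i \tr(\Fr_q \mid H^i_c(Y_{\fqb},\mathcal{F}_\rho))$.

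The key dichotomy is extracted from the exact sequence (\ref{monseq}). Since $N = \mg(X/Y)$ is normal in $G$ with cyclic quotient $G/N \cong \Z/\nu\Z$, the $N$-invariants $V_\rho^N$ form a $G$-subrepresentation, so by irreducibility either $\rho$ factors through $G/N$ (and is then one of the $\nu$ one-dimensional characters of the cyclic group $G/N$) or $V_\rho^N = 0$. In the first case, $H^{2\dim Y}_c(Y_{\fqb},\mathcal{F}_\rho) \cong V_\rho(-\dim Y)$ with $\Fr_q$ acting as $\chi_\rho(\Fr_q)\,q^{\dim Y}$; in the second case this top cohomology vanishes and Deligne's purity forces the entire alternating sum to be $O(q^{\dim Y - 1/2})$ with a constant involving only the total Betti number of $\mathcal{F}_\rho$.

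Plugging these back into the character expansion, the main term is
$$\frac{|\mathcal{C}|}{|G|}\,q^{\dim Y}\sum_{\chi \in \widehat{G/N}}\overline{\chi(\mathcal{C})}\,\chi(\Fr_q) = \frac{\nu|\mathcal{C}|}{|G|}\,q^{\dim Y},$$
because each character $\chi$ of $G/N$ satisfies $\chi(\mathcal{C}) = \chi(\Fr_q)$ (as $\mathcal{C}$ maps to $\Fr_q$ in $G/N$), so every summand equals $|\chi(\Fr_q)|^2 = 1$ and the sum equals $|\widehat{G/N}| = \nu$. The lower-degree contributions from the $N$-trivial $\chi$ and the entire contribution from $\rho$ with $V_\rho^N = 0$ are absorbed into the $O(q^{\dim Y - 1/2})$ error.

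The main obstacle is making the implied constant depend only on $\comp(\phi)$, and it has three ingredients. First, the group-theoretic bound $|G| \le (\deg\phi)!$ controls the number and dimensions of irreducible representations purely in terms of $\deg\phi \le \comp(\phi)$. Second, $\comp(Y)$ and $\comp(Z)$ are bounded by $O_{\comp(\phi)}(1)$ via the stability of complexity under the standard algebro-geometric constructions noted in the paper. Third, and most delicate, one needs a uniform bound on $\sum_i \dim H^i_c(Y_{\fqb},\mathcal{F}_\rho)$ in terms of $\comp(\phi)$ and $\dim\rho$; this can be obtained from Katz-style effective estimates on $\ell$-adic Betti numbers of constructible sheaves on varieties of bounded complexity, or ineffectively from the ultraproduct compactness argument already invoked in the paper.
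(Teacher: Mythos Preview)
Your argument is correct but takes a genuinely different route from the paper's. The paper deliberately avoids the $\ell$-adic machinery: it forms the ordered fiber power $X_Y^{(d)}$, takes an $\F_q$-irreducible component $W$ (whose function field is the Galois closure), notes that $W$ breaks into $\nu$ geometric components permuted cyclically by Frobenius, and for a chosen $g$ in the Frobenius coset uses Weil descent to twist one geometric component $W_1$ into an absolutely irreducible $\F_q$-variety $W'$. A direct orbit computation then shows that $|W'(\F_q)|=\frac{|G|}{\nu|\mathcal{C}|}\cdot|\{y:\Fr(y)=\mathcal{C}\}|$, and a single application of Lang--Weil to $W'$ (which has complexity $O_{\comp(\phi)}(1)$ as a component of $X_Y^{(d)}$) finishes the proof. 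Your character-theoretic approach via Grothendieck--Lefschetz and Weil~II is exactly the ``Deligne--Katz equidistribution'' method that the paper mentions and then sets aside in favor of the ``more elementary geometric proof''. The paper's method has the advantage that the uniformity in $\comp(\phi)$ is immediate from Lang--Weil on one variety of bounded complexity, whereas your argument needs a uniform bound on $\sum_i\dim H^i_c(Y_{\fqb},\mathcal{F}_\rho)$ --- a nontrivial input that, as you acknowledge, requires either Katz-type effective Betti-number estimates or an ineffective ultraproduct argument. Your approach, on the other hand, is the one that generalizes cleanly to non-finite monodromy and higher-rank sheaves. One small omission: you should pass to the smooth locus of $Y$ at the outset (as the paper does) so that the identification of $H^{2\dim Y}_c$ with the geometric coinvariants is available.
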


A similar statement also appears in \cite{ABR15}*{Theorem A.4} (stated in the language of rings), however the statement and proof there are slightly inaccurate. It is asserted there that $\comp(X)$ is bounded in terms of $\comp(Y),d$, which is generally false. The theorem can also be deduced from the zero-dimensional case of the (uniform) Deligne-Katz equidistribution theorem \cite{KaSa99}*{Theorem 9.7.13}. We give the more elementary geometric proof here.

\begin{proof} We may assume that $Y$ is smooth, otherwise replace $Y$ with its smooth locus and restrict $X,\phi$ accordingly. This increases the complexity of $\phi$ by at most $O_{\comp(\phi)}(1)$. Observe that $d=O_{\comp(\phi)}(1)$. Define $$X_Y^{(d)}=\{(x_1,\ldots,x_d)\in X\times_Y\ldots\times_Y X|x_i\neq x_j\mbox{ for }i\neq j\}.$$ Since the $d$-fold fiber product of $X$ with itself over $Y$ has complexity \\ $O_{\comp(\phi)}(1)$ so does $X_Y^{(d)}$.
There is an induced \'etale map $X_Y^{(d)}\to Y$. Let $W$ be an $\F_q$-irreducible component of $X_Y^{(d)}$. Over $\fqb$ it decomposes into $\nu$ connected components $W_1,\ldots,W_\nu$, where $\nu$ is the number appearing in (\ref{monseq}) (Note that for \'etale covers of a smooth variety the connected components coincide with the irreducible components). The components $W_i$ are defined over $\F_{q^\nu}$ and the Frobenius map $\Fr_q$ permutes them cyclically. The function field $\F_q(W)$ is the Galois closure of the composite of the fields $\F_q(X_i)$ ($X_i$ denoting the $\F_q$-components of $X$), viewed as subfields of a common algebraic closure of $\F_q(Y)$. All other $\F_q$-irreducible components of $X_Y^{(d)}$ are isomorphic to $W$ because the group $S_d$ acting on $X_Y^{(d)}$ permutes its irreducible components, since it is transitive on a fiber over any $y\in Y$.
 There is an induced \'etale map $\psi:W\to Y$, making $W$ a Galois cover of $Y$ with Galois group $G\cong\Mon(X/Y)$. Note that $G$ acts on $W$ by automorphisms defined over $\F_q$. The subgroup of elements $g\in G$ such that $g(W_i)=W_i$ for some (and therefore all) $i$ is precisely the geometric monodromy $\Mon^g(X/Y)$.

We use the left exponential notation for the action of $G$ and $z\mapsto z^q$ for the action of Frobenius $\Fr_q$ on $\fqb$-points of varieties defined over $\F_q$. Denote by $G_1$ the preimage of $\Fr_q$ under the map $\Mon(X/Y)\to\Gal\lb\F_{q^\nu}/\F_q\rb$. Let $g\in G_1$ be an element. We will now use $g$ as a twisting map to construct a variety $W'/\F_q$ such that
$W'\times\fqb\cong W_1$ via Weil's descent theory. For an introduction to this subject see \cite{MilneAG}*{\S 16}.
Define the bijection $\Phi_g:W_1\lb\fqb\rb\to W_1\lb\fqb\rb$ by
$\Phi_g=\prescript{g^{-1}}{}z^q$.
Since $\Phi_g^m=\Fr_q^m$ for some $m$ (e.g. the order of $g$) the map $\Phi_g$ defines a descent datum for the variety $W_1/\fqb$ with respect to the field extension $\fqb/\fq$. Consequently there exist a variety $W'/\F_q$ such that $W'\times\fqb\cong W_1$ with the $\Fr_q$-action on $W'\lb\fqb\rb$ corresponding to the action of $\Phi_g$ on $W_1\lb\fqb\rb$. Since $W_1$ is absolutely irreducible, so is $W'$.  We also note that since over $\fqb$ the variety $W'$ is isomorphic to an irreducible component of $X_Y^{(d)}$ we have $\comp(W')=O_{\comp(\phi)}(1)$. Therefore by the Lang-Weil bound we have
\beq\label{wlw}\abs{W'(\F_q)}=q^{\dim Y}\lb 1+O_{\comp(\phi)}\lb q^{-1/2}\rb\rb.\eeq

Let $w\in W_1\lb\fqb\rb$ be a point. Viewing $w$ as a point on $X_Y^{(d)}$ we may write it as $w=(x_1,\ldots,x_d)$ with $x_i\in X,\phi(x_i)=y$. We want to determine when $w\in W_1\lb\fqb\rb=W'\lb\fqb\rb$ falls in $W'(\F_q)$. This happens iff $w$ is fixed by $\Phi_g$, which is equivalent to $w^q=\prescript{g}{}w$, or $$^g(x_1,\ldots,x_d)=\lb x_1^q,\ldots,x_d^q\rb.$$
On the other hand we have $g\in\Fr(y)$ iff there exists an $h_0\in G$ such that $\prescript{gh_0}{}w=^{h_0}w^q$ or equivalently $\prescript{h_0}{}w\in W'(\F_q)$. In this case we have
\begin{multline*}\left\{h\in G|^hw\in W'(\F_q)\right\}=\left\{\left. h\in G\vphantom{W_1\lb\fqb\rb}\right|\prescript{h}{}w\in W_1\lb\fqb\rb,\prescript{h^{-1}gh}{}w=w^q\right\}=\\=
\Mon^g(X/Y)\cap C_G(g)h_0,\end{multline*}
here $C_G(g)$ denotes the centralizer of $g$. Since by assumption $g\in C_G(g)$ is mapped to a generator of $\Gal\lb\F_{q^\nu}/\F_q\rb$ in (\ref{monseq}) we have $$|C_G(g)h_0\cap \Mon^g(X/Y)|=
\frac{|C_G(g)|}{\nu}=\frac{|G|}{\nu|\Fr(y)|}.$$ On the other hand if $g\not\in\Fr(y)$ the set $\{h\in G|^hw\in W'(\F_q)\}$ is empty. Denote by $\mathcal{C}$ the conjugacy class of $g$ in $G$. Recall that $\psi:W\to Y$ is the map induced from $\phi$. Now noting that the action of $G$ on $\psi^{-1}(y)$ is free we conclude that
$$\abs{\psi^{-1}(y)\cap W'(\F_q)}=\left[\begin{array}{ll}|G|/\nu|\mathcal{C}|, & \Fr(y)=\mathcal{C},\\ 0, & \Fr(y)\neq\mathcal{C}.\end{array}\right.$$
Summing over all $y\in Y(\F_q)$ and using (\ref{wlw}) we obtain
\begin{multline*}\abs{\{y\in Y(\F_q)|g\in\Fr(y)\}}=\frac{\nu|\mathcal{C}|}{|G|}\abs{W'(\F_q)}=\\=\frac{\nu|\mathcal{C}|}{|G|}q^{\dim Y}\lb 1+O_{\comp(\phi)}\lb q^{-1/2}\rb\rb,\end{multline*}
as required.
\end{proof}

\section{$\F_q$-structure of hyperplane sections: proof of Theorem \ref{thmdec}} \label{secdec}

Now let $C\ss\P^n$ be a projective curve defined over $\F_q$. Let $C_1,\ldots,C_m$ be its $\F_q$ components. Over $\fqb$ each $C_i$ decomposes further into $\nu_i$ components which we denote by $C_{ij},j\in\Z/\nu_i$. We may assume that $$C_{ij}=C_{i0}^{q^j},j\in\Z/\nu_i\cong\Gal\lb\F_{q^\nu}/\F_q\rb$$ (we use the exponential notation $X^q$ to denote $X^{\Fr_q}$ for a variety $X\ss\P^n$ defined over $\fqb$). Denote $d=\deg C,d_i=\deg C_{ij}$. We have $\deg C_i=d_i\nu_i$ and $d=\sum d_i\nu_i$. Let $H\in\pns(\F_q)$ be a hyperplane defined over $\F_q$ intersecting $C$ transversally at $d$ points. The Frobenius $\Fr_q$ acts on $H\cap C$ by a permutation. This permutation preserves each $H\cap C_i$ and maps each $H\cap C_{ij}$ to $H\cap C_{i(j+1)}$ (recall that the indices $j$ lie in $\Z/\nu_i$).

We recall the definition of the permutational wreath product: let $H$ be a group acting on a set $\Om$ and $G$ another group. For a finite set $A$ we denote by $\Sym(A)$ its group of permutations. Denote by $H^G$ the set of functions $G\to H$. Consider the group $$H\wreath_\Om G=\{\sig\in\Sym(\Om\times G): (x,u)\mapsto (\psi(u)x,gu)|g\in G,\psi\in H^G\}.$$
This is the \emph{permutational wreath product} of $H$ with $G$ with respect to $\Om$. If $H=S_n$ is a symmetric group we will omit $\Om$ from the notation and simply write $S_n\wreath G$, with the implied standard action on $\Om=\{1,\ldots,n\}$.

Since the action of $\Fr_q$ on $H\cap C_i$ maps $H\cap C_{ij}$ to $H\cap C_{i(j+1)}$ it falls in $S_{d_i}\wreath \Gal\lb\F_{q^\nu_i}\rb\cong S_{d_i}\wreath\Z/\nu_i\ss S_{d_i\nu_i}$. The corresponding conjugacy class in $S_{d_i}\wreath\Z/\nu_i$ is well-defined (independent of labeling). We denote
$$S_{d_1,\ldots,d_m}^{\nu_1,\ldots,\nu_m}=\prod_{i=1}^m\lb S_{d_i}\wreath\Z/\nu_i\rb.$$
The action of $\Fr_q$ on $H\cap C$ determines a conjugacy class $\Fr(H\cap C)$ of $S_{d_1,\ldots,d_m}^{\nu_1,\ldots,\nu_m}$. We want to study the distribution of this class as $H$ varies over $\Sec(C)(\F_q)$ for $n,d$ fixed and $q\to\ity$.
From section \ref{seccheb} we know that this distribution is determined by the monodromy group $\Mon(\PSec(C)/\Sec(C))$ with $\PSec(C),\Sec(C)$ viewed as $\F_q$-varieties.

Over $\fqb$ we may write $$\PSec(C)=\coprod_{i=1}^m\coprod_{j=1}^{\nu_i}\PSec'(C_{ij})$$ with $\PSec'(C_{ij})^q=\PSec'(C_{i(j+1)})$ (the prime indicates that we delete points not lying over $\Sec(C)$, which doesn't affect monodromy) and therefore we may view $\Mon(\PSec(C)/\Sec(C))$ as a subgroup $$\Mon(\PSec(C)/\Sec(C))\ss S_{d_1,\ldots,d_m}^{\nu_1,\ldots,\nu_m}\ss S_d.$$ Denote $\nu=\mathrm{lcm}(\nu_1,\ldots,\nu_m)$. By (\ref{monseq}) there is a map
$$\Mon(\PSec(C)/\Sec(C))\to\Gal\lb \F_{q^\nu}/\F_q\rb\cong\Z/\nu,$$ where $\nu=\mathrm{lcm}(\nu_1,\ldots,\nu_m)$. We also have the projection $S_{d_1,\ldots,d_m}^{\nu_1,\ldots,\nu_m}\to\prod_{i=1}^m \Z/\nu_i$ and the natural product of projections $\Del=\Z/\nu\to\prod_{i=1}^m \Z/\nu_i$.
The diagram
\beq\label{diagram1}\begin{tikzcd}\Mon(\PSec(C)/\Sec(C)) \arrow[r, twoheadrightarrow] \arrow[d, hookrightarrow] & \Z/\nu \arrow[d, hookrightarrow, "\Del"] \\
 S_{d_1,\ldots,d_m}^{\nu_1,\ldots,\nu_m}\arrow[r, twoheadrightarrow, "p"] & \prod_{i=1}^m\Z/\nu_i\end{tikzcd}\eeq
 is commutative. Here in $\Z/\nu_i\cong\Gal\lb\F_{q^\nu_i}/\F_q\rb,\Z/\nu\cong\Gal\lb\F_{q^\nu}\rb$ we identify 1 with $\Fr_q$. Let $p,\Del$ be the maps in diagram (\ref{diagram1}). We have
 $$\Mon(\PSec(C)/\Sec(C))\ss p^{-1}\lb\im\Del\rb.$$

\begin{prop}\label{thmmon} Assume that $C$ is quasireflexive. Then with notation as above
$\Mon(\PSec(C)/\Sec(C))=p^{-1}(\im\Del)$.\end{prop}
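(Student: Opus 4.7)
The plan is to determine $\Mon := \Mon(\PSec(C)/\Sec(C))$ by first computing the geometric monodromy $\mg$ via Theorem \ref{thm1} and then reading off the arithmetic monodromy from the constraints imposed by the exact sequence \eqref{monseq} and the commutative diagram \eqref{diagram1}.

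For the geometric step, I would note that quasireflexivity is a geometric condition, so $C\times\fqb$ is quasireflexive with absolutely irreducible components $C_{ij}$ ($1\le i\le m$, $j\in\Z/\nu_i$), each of degree $d_i$. Applying Theorem \ref{thm1} to $C\times\fqb$ then yields
$$\mg(\PSec(C)/\Sec(C))=\prod_{i=1}^m\prod_{j\in\Z/\nu_i}S_{d_i}.$$
The key identification is that, under the embedding $\mg\hookrightarrow S_{d_1,\ldots,d_m}^{\nu_1,\ldots,\nu_m}=\prod_i(S_{d_i}\wreath\Z/\nu_i)$, this product is precisely the base subgroup of the wreath product, which by construction equals $\ker p$.

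To finish, I would use that $\mg$ is normal in $\Mon$ by \eqref{monseq}, so $\Mon$ is a union of $p$-fibers and therefore $\Mon=p^{-1}(p(\Mon))$. Commutativity of \eqref{diagram1} forces $p(\Mon)\subseteq\im\Del$, while the surjectivity of $\Mon\twoheadrightarrow\Z/\nu$ in \eqref{monseq} combined with commutativity shows that every element of $\im\Del$ is hit by some $p(\sigma)$ with $\sigma\in\Mon$. Hence $p(\Mon)=\im\Del$ and consequently $\Mon=p^{-1}(\im\Del)$.

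The main obstacle is the first step, namely correctly matching the output of Theorem \ref{thm1} with the kernel $\ker p$: one must see that the per-component factors $S_{d_i}$ (one for each absolutely irreducible piece $C_{ij}$) assemble into the base subgroup of each wreath product $S_{d_i}\wreath\Z/\nu_i$, which in turn is exactly the fiber over $0\in\Z/\nu_i$. Once this bookkeeping is settled, the arithmetic conclusion is a purely formal consequence of the exact sequence and the commutative diagram, with no further geometric input.
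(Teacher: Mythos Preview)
Your proposal is correct and follows essentially the same approach as the paper: compute the geometric monodromy via Theorem \ref{thm1}, identify it with $\ker p$, and then read off the arithmetic monodromy from the commutative diagram \eqref{diagram1} together with the surjection $\Mon\twoheadrightarrow\Z/\nu$. Your write-up is in fact slightly more explicit than the paper's, which simply asserts $p(\Mon)=\im\Del$ by citing \eqref{diagram1}; your unpacking of this via the two inclusions is exactly the content of that citation.
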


\begin{proof} We first compute the geometric monodromy $$\Mon^g(\PSec(C)/\Sec(C))=\Mon(\PSec(C\times\fqb)/\Sec(C\times\fqb)).$$
Since $C$ is quasireflexive and its geometric components are $C_{ij},1\le i\le m,j\in\Z/\nu_i,\deg C_{ij}=d_i$, by Theorem \ref{thm1} we have $$\Mon^g(\PSec(C)/\Sec(C))=\prod_{i=1}^m\prod_{j=1}^{\nu_i} S_{d_i}$$ and when viewed as a subgroup of $S_{d_1,\ldots,d_m}^{\nu_1,\ldots,\nu_m}$ it is precisely the kernel of the projection $p$. Therefore
$\ker(p)\ss\Mon(\PSec(C)/\Sec(C))$. Since by (\ref{diagram1}) we have $p\lb\Mon(\PSec(C)/\Sec(C))\rb=\im\Del$ we conclude that $$\Mon(\PSec(C)/\Sec(C))=p^{-1}\lb\im\Del\rb.$$
\end{proof}

\begin{thm}\label{cormain} Let $C$ be as in the last theorem and let $\mathcal{C}$ be a conjugacy class in $S_{d_1,\ldots,d_m}^{\nu_1,\ldots,\nu_m}$ mapped to $(1,\ldots,1)$ under the projection $p$ in (\ref{diagram1}). Then
$$\abs{\left\{H\in\Sec(C)(\F_q)|\Fr(H\cap C)\in\mathcal{C}\right\}}=\frac{|\mathcal{C}|}{\prod_{i=1}^m (d_i!)^{\nu_i}}q^n
\lb 1+O_{n,d}\lb q^{-1/2}\rb\rb$$
(here $d=\deg C$).
\end{thm}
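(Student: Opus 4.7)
The plan is to apply the Chebotarev density theorem (Theorem \ref{thmcheb}) to the finite \'etale cover $\phi:\PSec(C)\to\Sec(C)$, using the identification of the arithmetic monodromy provided by Proposition \ref{thmmon}. Since $C$ is quasireflexive, Proposition \ref{thmmon} gives $G:=\Mon(\PSec(C)/\Sec(C))=p^{-1}(\im\Del)$. The kernel of $p$ is $\prod_{i,j}S_{d_i}$, of order $\prod_{i=1}^m(d_i!)^{\nu_i}$, while $\Del:\Z/\nu\to\prod_{i=1}^m\Z/\nu_i$ is injective so $|\im\Del|=\nu$; consequently $|G|=\nu\prod_{i=1}^m(d_i!)^{\nu_i}$. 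Note also that $\dim\Sec(C)=n$ since $\Sec(C)$ is open in $\pns$.

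Next I verify that the given class $\mathcal{C}$ fits into the Chebotarev setup. By hypothesis $p(\mathcal{C})=\{(1,\ldots,1)\}=\{\Del(1)\}$, so $\mathcal{C}\subseteq p^{-1}(\im\Del)=G$; moreover, by the commutative diagram (\ref{diagram1}) every element of $\mathcal{C}$ maps to $1=\Fr_q\in\Z/\nu$ under the quotient map $G\to\Z/\nu$ appearing in (\ref{monseq}). A priori $\mathcal{C}$, being a conjugacy class in the larger ambient group $S_{d_1,\ldots,d_m}^{\nu_1,\ldots,\nu_m}$, may split as a disjoint union $\mathcal{C}=\bigsqcup_j\mathcal{C}_j'$ of $G$-conjugacy classes, each still mapping to $\Fr_q$. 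Applying Theorem \ref{thmcheb} separately to each $\mathcal{C}_j'$ yields
$$\card{\{H\in\Sec(C)(\F_q)\mid\Fr(H\cap C)=\mathcal{C}_j'\}}=\frac{\nu|\mathcal{C}_j'|}{|G|}q^n\lb 1+O_{\comp(\phi)}\lb q^{-1/2}\rb\rb,$$
and summing over $j$ with $\sum_j|\mathcal{C}_j'|=|\mathcal{C}|$, the factor $\nu$ in the numerator cancels the $\nu$ in $|G|$, leaving the asserted main term $|\mathcal{C}|q^n/\prod_{i=1}^m(d_i!)^{\nu_i}$.

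It remains to convert the $O_{\comp(\phi)}$ error into an $O_{n,d}$ error: since $\PSec(C)$ and $\Sec(C)$ are built from $C\subseteq\P^n$ via standard algebro-geometric operations (fiber products, projections, and complements of hypersurfaces whose degrees are controlled by $n$ and $d$, as noted in Section \ref{seccheb}), we have $\comp(\phi)=O_{n,d}(1)$, and so the error term becomes $O_{n,d}(q^{-1/2})$ as claimed. The only nontrivial point is the potential decomposition of $\mathcal{C}$ into several $G$-classes, but this is handled uniformly by the summation above, so the argument is essentially a straightforward combination of Proposition \ref{thmmon} with Theorem \ref{thmcheb}.
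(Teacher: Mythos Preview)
Your proof is correct and follows essentially the same approach as the paper: compute $|G|=\nu\prod_i(d_i!)^{\nu_i}$ via Proposition \ref{thmmon}, then apply Theorem \ref{thmcheb} and bound $\comp(\phi)$ in terms of $n,d$. You are in fact slightly more careful than the paper in noting that an ambient conjugacy class $\mathcal{C}\subseteq S_{d_1,\ldots,d_m}^{\nu_1,\ldots,\nu_m}$ may split into several $G$-classes and handling this by summation; conversely, the paper gives a more concrete justification of $\comp(\phi)=O_{n,d}(1)$ by exhibiting the graph of $\phi$ and invoking the fact that a degree-$d$ curve in $\P^n$ is cut out by equations of degree at most $d$.
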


\begin{proof} We note that $$\abs{p^{-1}\lb\im\Del\rb}=\nu|\ker(p)|=\nu\prod_{i=1}^m(d_i!)^{\nu_i}.$$
The theorem follows by combining Proposition \ref{thmmon} and Theorem \ref{thmcheb} once we verify that the projection map
$\phi:\PSec(C)\to\Sec(C)$ has complexity $O_{n,d}(1)$. The graph of $\phi$ is the locally closed set
$$\Gamma=\{(H,H,P)\in\pns\times\pns\times\P^n|P\in H\cap C, I(P,H.C)=1\}.$$
Since a curve of degree $d$ in $\P^n$ can always be defined by equations of degree $\le d$ (see \cite{BGT11}*{Theorem A.3})
and the condition $I(P,H.C)=1$ can be expressed as the vanishing of certain minors depending on the coefficients of these equations we conclude that $\comp(\phi)=\comp(\Gamma)=O_{n,d}(1)$ as required.
\end{proof}

\begin{proof}[Proof of Theorem \ref{thmdec}] This is just a special case of Theorem \ref{cormain} with $\nu_1=\ldots=\nu_m=1$.\end{proof}

\section{Applications}
\label{secapp}

In the present section we prove the corollaries listed in the introduction. We will see that they all follow quite easily from our main theorems. Lemma \ref{lemproj} will be an important tool for establishing quasireflexivity for the curves we consider in sections \ref{secbh} and \ref{secg}.

Throughout this section $\k$ will be an algebraically closed field, which we will explicitly specify to be $\fqb$ when necessary.

\subsection{The Bateman-Horn conjecture over $\F_q[t]$ for large $q$: proof of Corollary \ref{bh}}
\label{secbh}

We will in fact prove a slightly more general statement than Corollary \ref{bh}, namely we will drop the absolute irreducibility condition. Let $q$ be a prime power, $n$ a natural number and $F_1,\ldots,F_m\in\F_q[t,x]\sm\F_q[t,x^p]$ non-associate irreducible polynomials. We want to study the joint decomposition statistics of $F_i(t,f(t)),1\le i\le m$ as $f$ varies over all degree $n$ polynomials in $\F_q[t]$ and in particular determine how often they are all irreducible.  Let $F_{ij}\in\F_{q^{\nu_i}}[t,x],j\in\Z/\nu_i$ be the irreducible factors of $F_i$ over $\fqb$. We may assume that
$F_{i(j+1)}=F_{i0}^{\Fr_q^j}$.

Let $A_0,\ldots,A_n$ be free variables and denote
$$d_i=\deg F_{ij}\lb t,\sum_{j=0}^nA_jt^j\rb, d=\sum_{i=1}^m d_i\nu_i.$$
Let $\mathcal{C}$ be a conjugacy class in $S_{d_1\nu_1,\ldots,d_m\nu_m}$. Let $f\in\F_q[t],\deg f=n$ such that $F_i(t,f(t))$ is squarefree of degree $d_i\nu_i$. For such an $F$ the Frobenius action on the roots of $F_i(t,f_i(t))$ defines a permutation in $S_{d_i}^{\nu_i}\ss S_{d_i\nu_i}$, since $\Fr_q$ maps the roots of $F_{ij}(t,f(t))$ to the roots of $F_{i(j+1)}(t,f(t))$. Thus we obtain a well-defined conjugacy class
$\Th(f)\ss S_{d_1,\ldots, d_m}^{\nu_1,\ldots,\nu_m}$ (by taking the product over $1\le i\le m$).

Denote $$U_n=\{f\in\fqb[t]\mid\deg f=n,F_i(t,f(t))\mbox{ squarefree of degree }d_i,1\le i\le m\}.$$ This can be viewed as an open set in $\A^{n+1}$ (with the coefficients of $f$ as coordinates). The proof of the next proposition will show that it is a nonempty open subset (this is also shown in \cite{Rud14}).

\begin{prop}\label{propbh} Assume that one of the following holds:
\begin{enumerate}
\item[(i)]$n\ge 3$.
\item[(ii)]$n\ge 2$ and $\chr\F_q\neq 2$.
\item[(iii)] $\chr\F_q>\max d_i$.
\end{enumerate}
Let $\mathcal{C}$ be a conjugacy class in $S_{d_1,\ldots,d_m}^{\nu_1,\ldots,\nu_m}$ that is mapped to $(1,\ldots,1)$ by the projection to $\prod_{i=1}^m\Z/\nu_i$. Then
\begin{equation}\label{frpol}\left|\{f\in U(\F_q)\mid \Th(f)=\mathcal{C}\}\right|
=\frac{|\mathcal{C}|}{\prod_{i=1}^m(d_i!)^{\nu_i}}q^{n+1}\lb 1+O_{d}\lb q^{-1/2}\rb\rb.\end{equation}
In particular the number of $f\in\F_q[t],\deg f=n$ with all $F_i(t,f(t))$ irreducible is $$\lb\prod_{i=1}^m{d_i}\rb^{-1}q^{n+1}\lb 1+O_{d}\lb q^{-1/2}\rb\rb.$$
\end{prop}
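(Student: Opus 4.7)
The plan is to realize the polynomial statistics as hyperplane-section statistics of an appropriate projective curve and then invoke Theorem~\ref{cormain}. Let $C\ss\P^{n+1}$ be the projective closure of the image of the affine curve $V(F_1\cdots F_m)\ss\A^2$ under the embedding $(t,x)\mapsto(t,t^2,\ldots,t^n,x)$. The $\F_q$-irreducible components of $C$ are the closures $C_i=\overline{V(F_i)}$, which decompose over $\fqb$ as unions of $C_{ij}=\overline{V(F_{ij})}$; a direct calculation of a generic hyperplane section gives $\deg C_{ij}=d_i$, so that $\deg C=\sum d_i\nu_i=d$.

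For the dictionary with polynomials: a hyperplane $H$ in $(\P^{n+1})^*$ with equation $b_{n+1}x_{n+1}+b_nx_n+\cdots+b_0x_0=0$, restricted to the affine chart $x_0=1$ and normalized by $b_{n+1}=-1$, cuts $C$ at points $(t,x)$ satisfying $x=f(t)=b_0+b_1t+\cdots+b_nt^n$ and $F_i(t,x)=0$. Under this identification, $U_n(\F_q)$ corresponds bijectively to the subset of $\Sec(C)(\F_q)$ consisting of hyperplanes with $b_{n+1}\neq 0$, $b_n\neq 0$, and not passing through any of the finitely many points of $C$ in the hyperplane at infinity $\{x_0=0\}$; moreover $\Fr(H\cap C)=\Th(f)$ under the identification. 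The complement of this subset in $\Sec(C)(\F_q)$ is a union of $O_d(1)$ codimension-one pieces and has cardinality $O_d(q^n)$, which is absorbed into the error term of Theorem~\ref{cormain}.

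The main step is to verify that $C$ is quasireflexive. In cases (i) and (ii), where $n\ge 2$, I would invoke Lemma~\ref{lemproj} with projection from $O=[0{:}\cdots{:}0{:}1]\in\P^{n+1}$. The hypothesis $F_i\notin\F_q[t,x^p]$ guarantees $\partial F_i/\partial x\neq 0$, so $\pi$ is generically \'etale on each component and $O$ is not a strange point for $C$. The image $\pi(C)$ equals the rational normal curve $\Gamma_n\ss\P^n$ of degree $n$, which is quasireflexive for $n\ge 2$: working in the chart $[1{:}t{:}\ldots{:}t^n]$ at $t=0$, the hyperplane $a_2x_2+\cdots+a_nx_n=0$ with generic $a_j$ is simply tangent to $\Gamma_n$. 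In case (i) the auxiliary char-$2$ hypothesis of Lemma~\ref{lemproj} is satisfied because $\Gamma_n$ is non-degenerate (hence not contained in any $2$-plane) for $n\ge 3$; case (ii) excludes char $2$ altogether. In case (iii) with $n=1$, Lemma~\ref{lemproj} does not apply; instead I would argue directly that each plane curve $C_{ij}$ of degree $d_i<\chr\F_q$ is reflexive, by combining the Hefez-Klein generic order of contact theorem with the Segre-Wallace criterion (any inseparability degree of $\k(\Con(C_{ij}))/\k(C_{ij}^*)$ would be a power of $\chr$ exceeding $d_i$), hence quasireflexive by Proposition~\ref{propquasi}. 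Since distinct $F_{ij}$ yield distinct $C_{ij}$, reflexivity forces $C_{ij}^*\neq C_{kl}^*$ for $(i,j)\neq(k,l)$, upgrading componentwise quasireflexivity to quasireflexivity of $C$.

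With $C$ quasireflexive, Theorem~\ref{cormain} gives the stated equidistribution of $\Fr(H\cap C)$ over $\Sec(C)(\F_q)$, which transfers via the identification of paragraph~2 to the asserted count for $\Th(f)$ over $U_n(\F_q)$. The final assertion follows by restricting to the conjugacy class in $S_{d_1,\ldots,d_m}^{\nu_1,\ldots,\nu_m}$ where each $i$-th factor acts as a single $d_i\nu_i$-cycle; this class has size $\prod_i(d_i-1)!\,d_i!^{\nu_i-1}$, giving ratio $\prod 1/d_i$. The hard part will be the quasireflexivity verification, particularly case (iii), where the low ambient dimension prevents use of Lemma~\ref{lemproj} and one must directly exploit the Hefez-Klein/Segre-Wallace toolkit.
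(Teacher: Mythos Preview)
Your proof follows the paper's approach essentially verbatim: the same curve $C\subset\P^{n+1}$ obtained by lifting the plane curve $F=0$ over the rational normal curve, the same hyperplane-to-polynomial dictionary, the same projection onto $Z_n$ and appeal to Lemma~\ref{lemproj} for quasireflexivity in cases (i)--(ii), and the same reduction to plane-curve reflexivity in case (iii) with $n=1$. For the final ``all irreducible'' count the paper cites \cite{Bar12}*{Lemma 5.1} rather than computing directly, but your computation gives the same answer.

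One small omission in case (iii): you must separately dispose of components $C_{ij}$ with $d_i=1$, since lines fall outside the reflexivity framework (there is no dual hypersurface). The paper observes that for such components $\Fr(H\cap C_i)$ is forced to be the $\nu_i$-cycle regardless of $H$, so these components may simply be removed before checking quasireflexivity of the remainder. For the non-line components the paper cites \cite{Hef89} for the fact that a non-reflexive plane curve has degree at least $\chr\k$; this is equivalent to your Hefez--Klein/Segre--Wallace argument once you add the missing observation that the generic order of contact is bounded by $\deg C_{ij}=d_i$ via B\'ezout.
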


Note that Corollary \ref{bh} is the special case $\nu_1=\ldots=\nu_m=1$.

The proof of the proposition will make use of the rational normal curve. Recall that the rational normal curve $Z_n\in\P^n$ is defined as the projective closure of the affine model
$\{(t,t^2,\ldots,t^n)|t\in\k\}\ss\mathbf{A}^n$ (see \cite{Har81}*{\S IV.3, ex. 3.4}). The curve $Z_N$ is irreducible of degree $n$.

\begin{lem}\label{propnormal} The curve $Z_n$ is quasireflexive for $n\ge 2$.\end{lem}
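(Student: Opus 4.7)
The plan is to exhibit a simple tangent hyperplane to $Z_n$ directly via the standard parametrization. Since $Z_n$ is irreducible it has only one component, so we just need one such hyperplane.

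First I parametrize $Z_n$ by $\phi:\A^1\hookrightarrow\P^n$, $t\mapsto[1:t:t^2:\ldots:t^n]$, with the remaining point $\phi(\infty)=[0:\ldots:0:1]$. For a hyperplane $H\ss\pns$ defined by a linear form $L(x)=\sum_{i=0}^n a_ix_i$, the pullback $\widetilde L(t):=L(\phi(t))=\sum_{i=0}^n a_it^i$ is a polynomial of degree $\le n$, and the divisor $H\cap Z_n$ (on the affine chart) is exactly the divisor of zeros of $\widetilde L$; the point $\phi(\infty)$ lies on $H$ iff $a_n=0$. In particular the intersection multiplicity $I(\phi(\alpha),H.Z_n)$ equals the order of vanishing of $\widetilde L$ at $t=\alpha$, and this holds in any characteristic.

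Next I construct a simple tangent hyperplane. Since $\k$ is algebraically closed it is infinite, so I can pick $n-1\ge 1$ distinct elements $\alpha,\beta_1,\ldots,\beta_{n-2}\in\k$ (for $n=2$ just an $\alpha$) and set
\[\widetilde L(t)=(t-\alpha)^2\prod_{i=1}^{n-2}(t-\beta_i)\in\k[t],\]
a polynomial of degree exactly $n$ with leading coefficient $1$, so $a_n=1\neq 0$. Reading off the coefficients $a_0,\ldots,a_n$ defines a hyperplane $H$ that misses $\phi(\infty)$, meets $Z_n$ at $\phi(\alpha)$ with intersection multiplicity $2$ (hence is tangent to $Z_n$ at that smooth point), and meets $Z_n$ transversally at the $n-2$ distinct points $\phi(\beta_i)$. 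Thus $|H\cap Z_n|=1+(n-2)=n-1=\deg Z_n-1$, so $H$ is a simple tangent hyperplane and $Z_n$ is quasireflexive.

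There is essentially no obstacle here; the only point worth flagging is that this argument makes no use of $\chr\k$, which matters because in characteristic $2$ quasireflexivity is the correct substitute for reflexivity (cf.\ Proposition \ref{propquasi}) and one might worry that a double root of $\widetilde L$ behaves pathologically since its formal derivative vanishes identically on $(t-\alpha)^2$. This does not affect the argument because the intersection multiplicity is defined as the order of vanishing of $\widetilde L$ at $\alpha$, which is $2$ regardless of characteristic, so $H$ is genuinely a simple tangent hyperplane in all characteristics $\ge 2$ (and $n\ge 2$ is needed only to ensure that a double root fits into a degree-$n$ polynomial).
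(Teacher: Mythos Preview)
Your proof is correct and follows essentially the same approach as the paper: both identify hyperplane sections of $Z_n$ with degree-$n$ polynomials via the standard parametrization and then choose a polynomial with exactly one double root and $n-2$ simple roots to produce a simple tangent hyperplane. Your version is slightly more explicit in writing down the polynomial $(t-\alpha)^2\prod(t-\beta_i)$ and in checking the point at infinity, and your remark on characteristic $2$ is a nice clarification, but the argument is the same.
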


\begin{proof} In the present proof $\k$ is an arbitrary algebraically closed field, but we will apply the lemma with $\k=\fqb$. Working on the affine patch $\mathbf{A}^n\ss\P^n$ with coordinates $(x_1,\ldots,x_n)$ let $H$ be the hyperplane given by $a_1x_1+\ldots+a_nx_n+a_0=0$ with $a_i\in\k$. We assume that $a_n\neq 0$ (this is true generically). Then
$$H\cap Z_n=\left\{(t,t^2,\ldots,t^n):\sum_{i=0}^na_it^i=0\right\}.$$ We may choose $a_0,\ldots,a_n$ such that the polynomial $\sum_{i=0}^na_it^i$ has exactly $n-1$ roots. Then $|H\cap Z|=n-1=\deg(Z_n)-1$ and $Z_n$ is quasireflexive.\end{proof}

\begin{proof}[Proof of Proposition \ref{propbh}] Denote $F(t,x)=\prod_{i=1}^m F_i(t,x)$. By our assumptions $\partial F/\partial x\neq 0$.
Let $C$ be the closure in $\P^{n+1}$ of the set
$$\{(x,t,t^2,\ldots,t^n)\in\A^{n+1}|F(t,x)=0\}.$$ We denote by $x,t_1,\ldots,t_n$ the coordinates in $\A^n$. Note that the projection to the coordinates $x,t_1$ gives a birational map of $C$ onto the plane curve $F(t,x)=0$, therefore its components over $\F_q$ are the closures $C_i$ of the affine models
$$\{(x,t,t^2,\ldots,t^n)\in\A^{n+1}|F_i(t,x)=0\}$$
and over $\fqb$ each $C_i$ splits further into $\nu_i$ irreducible components $C_{ij},j\in\Z/\nu_i$ with affine models
$$\{(x,t,t^2,\ldots,t^n)\in\A^{n+1}|F_{ij}(t,x)=0\},$$ where $F_{ij}\in\fqb[t,x]$ are the irreducible factors of $F_i$.

Let $H\in\pns(\F_q)$ be a hyperplane which is not the hyperplane at infinity. It has an affine model with equation
$ax+\sum_{i=1}^n a_it_i+a_0=0$. For all but $O\lb q^n\rb$ such hyperplanes we may further assume that $a=1$ and $a_n\neq 0$.
In this case the $t_1$-coordinates of the points in $H\cap C_i$ correspond to the roots of $F_i(t,f(t))$ where $f(t)=\sum_{k=0}^na_kt^k$ (with multiplicities corresponding as well). We see that $$\deg C_{ij}=\deg F_{ij}=d_i,\deg C_i=d_i\nu_i,\deg C=\sum_{i=1}^md_i\nu_i=d.$$
If $H\in\Sec(C)$ then $f\in U_n(\F_q)$ (in particular $U_n\neq\emptyset$). Conversely all but $O_{d}\lb q^n\rb$ polynomials $f\in U_n(\F_q)$ arise in this way, since $\abs{\Sec(C)(\F_q)}=q^{n+1}+O_{d}\lb q^{n}\rb$.

For $H\in\Sec(C)(\F_q)$ as above the action of $\Fr_q$ on the roots of $F_i(t,f)$ and on $H\cap C_i$ corresponds. Thus the Frobenius class
$\Th(f)\ss S_{d_1,\ldots,d_m}^{\nu_1,\ldots,\nu_m}$ defined above coincides with the Frobenius class of $H$ viewed as a point on $\Sec(C)(\F_q)$ under the finite \'etale map $\PSec(C)\to\Sec(C)$, provided that we show that $\Mon(\PSec(C)/\Sec(C))=S_{d_1,\ldots,d_m}^{\nu_1,\ldots,\nu_m}$.
By Proposition \ref{thmmon} this would follow if we can show that $C$ is quasireflexive and then (\ref{frpol}) will follow from Theorem \ref{cormain}.
Note that for $p,\Del$ as in (\ref{diagram1}) we have $|p^{-1}\lb\im\Del\rb|=\prod_{i=1}^m(d_i!)^{\nu_i}$.

We project the curve $C$ from the point $(0:1:0:\ldots:0)$. On the affine patch $\A^{n+1}$ this projection acts by
$(x,t_1,\ldots,t_n)\mapsto (t_1,\ldots,t_n)$. Since $\partial F/\partial x\neq 0$ this projection is generically \'etale on $C$ and so $(0:1:0:\ldots:0)$ is not a strange point of $C$. The image of this projection is precisely the rational normal curve $Z_n$ and therefore quasireflexive. By Lemma \ref{lemproj} it follows that $C$ is quasireflexive for $n\ge 2$ unless $\chr{k}=2$ and $Z_n$ is contained in a plane, which only happens if $n=2$ and we assumed that this is not the case.

It remains to treat the case $n=1$ and $\chr\F_q>\max d_i$. In this case $C$ is the plane curve $F(t,x)=0$ with geometric components of degree $d_i$. But the degree of any non-reflexive plane curve over $\k$ which is not a line is at least $\chr\k$ \cite{Hef89}, so the components of $C$ are reflexive or lines. If $C_i$ over $\fqb$ is the union of $\nu_i$ lines then $\Fr(H\cap C)$ is always a $\nu_i$-cycle, so such components can be ignored. The remaining part of $C$ is reflexive and therefore quasireflexive.

The last assertion of the proposition (about the number of $f$ such that $F_i(t,f(t))$ is irreducible) follows from (\ref{frpol}) by showing that the number of
$$\sig=(\sig_1,\ldots,\sig_m)\in p^{-1}(\Im(\Del))\ss \prod_{i=1}^m S_{d_i}^{\nu_i}$$ ($p,\Del$ as in (\ref{diagram1})) such that $\sig_i$ are full (length $d_i\nu_i$) cycles is $\prod_{i=1}^md_i^{-1}(d_i!)^{\nu_i}$. This elementary combinatorial fact is proved in
\cite{Bar12}*{Lemma 5.1}.

\end{proof}

\subsection{Decomposition statistics of divisors on curves: \\ proof of Corollary \ref{corg}}
\label{secg}

In the present subsection we work over $\k=\fqb$. Let $C/\F_q$ be a smooth absolutely irreducible projective curve of genus $g$, $E$ a divisor on $C$ defined over $\F_q$ and $f\in\F_q(C)$ a rational function on $C$. Denote $d=\deg\mathrm{lcm}((f)_\ity,E_\ity)$. This is the degree of a generic divisor in $I(f,E)=\{(f+g)_0|g\in L(E)\}$. If we assume $\deg E\ge 2g-1$ then by Riemann-Roch we have $\dim L(E)=\deg E-g+1$ and therefore $|I(f,E)(\F_q)|=q^{\deg E-g+1}$. Let $$f_1,\ldots,f_n,n=\deg E-g+1$$ be an $\F_q$-basis for $L(E)$.

Consider the rational map $$\phi:C\to\P^n: \phi(P)= (f(P):f_1(P):\ldots:f_n(P)).$$ Since $C$ is smooth it can be extended to a morphism $C\to\P^n$. Denote $C'=\phi(C)$. It is an irreducible projective curve defined over $\F_q$. We have $\deg C=d$.

\begin{prop}\label{propg} Assume that $\deg E\ge 2g+2,\chr\k\neq 2$ or $\deg E\ge 2g+3$. Then $C'$ is quasireflexive.\end{prop}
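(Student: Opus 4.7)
The plan is to realize $C'=\phi(C)$ as a projection target of the complete linear system embedding $C''\subset\P^{n-1}$ of $C$ under $|E|$, reducing quasireflexivity of $C'$ to that of $C''$ via Lemma \ref{lemproj}. Under either hypothesis $\deg E\ge 2g+1$, so $|E|$ is very ample and the auxiliary map $\psi=(f_1:\ldots:f_n):C\to\P^{n-1}$ is a closed embedding onto an irreducible curve $C''$. The projection $\pi:\P^n\setminus\{O\}\to\P^{n-1}$ from $O=(1:0:\ldots:0)$ satisfies $\pi\circ\phi=\psi$, so $\phi$ is also a closed embedding (in particular $\deg C'=d$); base-point-freeness of $|E|$ gives $O\notin C'$, and the induced morphism $\pi|_{C'}:C'\to C''$ is an isomorphism of smooth projective curves, so $O$ is a non-strange point for $C'$.

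To invoke Lemma \ref{lemproj} I would check the remaining hypotheses. One has $n=\deg E-g+1\ge g+3\ge 3$, so we are in the right dimension regime. In characteristic $2$ the stronger bound $\deg E\ge 2g+3$ gives $n-1\ge 3$, and since $C''$ is embedded by a complete linear system it is non-degenerate in $\P^{n-1}$, hence not contained in a plane. Thus quasireflexivity of $C'$ reduces to quasireflexivity of $C''$.

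For $C''$ I would run a Riemann--Roch count on the subspaces $L(E-kP)\subset L(E)$. Under the hypotheses $\deg(E-3P)\ge 2g-1$ always, and in characteristic $2$ also $\deg(E-2P-2P')\ge 2g-1$ for $P\ne P'$, so Riemann--Roch gives $\dim L(E-kP)=\deg E-g-k+1$ with no jumps. In odd characteristic this yields $\dim L(E-3P)<\dim L(E-2P)$ for every $P\in C$, meaning a generic hyperplane tangent at $P$ meets $C''$ at $P$ with multiplicity exactly $2$; no point of $C''$ is a flex, and Proposition \ref{propquasi} gives quasireflexivity. In characteristic $2$ I would instead exhibit a simple tangent hyperplane via an incidence count in $\P(L(E))$: the tangent locus (sections with some double zero) has codimension $1$, while both the triple-point locus and the double-double locus have codimension $\ge 2$ (the latter requiring precisely $\deg E\ge 2g+3$), so a generic tangent hyperplane is simple.

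The main obstacle is the characteristic-$2$ case, where Proposition \ref{propquasi} is unavailable and one must establish the existence of a simple tangent hyperplane directly; here the sharpness of $\deg E\ge 2g+3$ is essential, as it is exactly what makes the double-double incidence variety drop in codimension by $2$ rather than by $1$.
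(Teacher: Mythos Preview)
Your proposal is correct and follows essentially the same approach as the paper: reduce to the complete-linear-system embedding $C''\subset\P^{n-1}$ via projection from $O=(1:0:\ldots:0)$, verify the hypotheses of Lemma \ref{lemproj}, and establish quasireflexivity of $C''$ by the Riemann--Roch count on $L(E-3P)$ (flex condition) together with, in characteristic $2$, the dimension count on $L(E-2P-2Q)$ to rule out generic bitangents. Your observation that $O\notin C'$ by base-point-freeness and that $\pi|_{C'}$ is an isomorphism is slightly cleaner than the paper's birationality statement, but the substance is identical.
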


\begin{proof}
Consider the projection $\pi:\P^n\to\P^{n-1}$ from the point $(1:0:\ldots:0)$. It acts as $\pi(x_0:\ldots:x_n)=(x_1:\ldots:x_n)$. Denote $\psi=\pi\circ\phi$. We have $\psi(P)=(f_1(P):\ldots:f_n(P))$. By \cite{Har81}*{Corollary IV.3.2} $\psi$ is an isomorphic embedding of $C$ into $\P^{n-1}$. Consequently the maps $C\to C'\to C''$ are birational. The projection $\pi$ is birational and in particular generically \'etale on $C'$ and therefore the point $O$ is not a strange point of $C$.

Next we show that $C''$ is quasireflexive, which by Lemma \ref{lemproj} would imply the same for $C'$ (if $\chr\k=2$ by our assumption $n-1\ge 3$ and $C''\ss\P^n$ is not contained in a hyperplane since $f_1,\ldots,f_n$ are linearly independent, so the conditions of the lemma are satisfied). Let $P\in C''$ be a point. A hyperplane $H=\{a_1x_1+\ldots+a_nx_n=0\}$ is tangent at $P$ iff
$\sum_{i=1}^n a_if_i\in L(E-2\psi^{-1}(P))$ and $I(P,H.C'')>2$ iff $\sum_{i=1}^n a_if_i\in L(E-3\psi^{-1}(P))$. By Riemann-Roch and assuming $\deg E\ge 2g+2$ we have
$$\dim L\lb E-3\psi^{-1}(P)\rb=n-3<n-2=\dim L\lb E-3\psi^{-1}(P)\rb$$ and therefore for any $P\in C''$ there exists a hyperplane $H$ such that $I(P,H\cap C'')=2$. If $\chr\k\neq 2$ this implies that $C''$ is reflexive and therefore quasireflexive (see section \ref{secref}).

Now assume $\chr\k=2$ and $\deg E\ge 2g+3$. We have shown that for a generic tangent hyperplane $H$ at a point $P$ we have $I(P,H.C)=2$, i.e. the generic point of $C''$ is not a flex. However we still need to show that the generic tangent hyperplane to $C''$ is only tangent at one point. Let $P,Q\in C''$ be two distinct points. By the reasoning above the set
$$\{H\in\pns|I(P,H.C''),I(Q,H.C'')>1\}$$ is a linear projective space in bijection with
$$\P L\lb E-2\psi^{-1}(P)\rb \cap \P L\lb E-2\psi^{-1}(Q)\rb=\P L\lb E-2\psi^{-1}(P)-2\psi^{-1}(Q)\rb$$
(for a vector space $V$ we denote by $\P V$ the corresponding projective space).

Under the assumption $\deg E\ge 2g+3$ by Riemann-Roch we have $$\dim\P L\lb E-2\psi^{-1}(P)-2\psi^{-1}(Q)\rb=n-5.$$ If $n=4$ then the set of $H$ tangent at both $P$ and $Q$ is empty for any $P\neq Q$, so every tangent is only tangent at one point. Therefore we assume $n\ge 5$.
Consider the variety
$$W=\{(H,P,Q)\in\pns\times C''\times C''|H\mbox{ tangent to }C''\mbox{ at }P,Q\}.$$
Since the fibers of the projection $V\to C''\times C''$ have dimension $n-5$ we have $\dim W=n-3$ and so its projection to $\pns$ has dimension $n-3$. Since the dual of $C''$ has dimension $n-2$ the generic tangent hyperplane to $C''$ does not lie in the projection of $V$, so it is only tangent at one point. This shows that $C''$ is quasireflexive and by Lemma \ref{lemproj} it follows that $C'$ is reflexive.

\end{proof}

\begin{proof}[Proof of Corollary \ref{corg}] Denote by $U\ss\pns$ the open set of hyperplanes $H\in\pns(\F_q)$ defined by an equation of the form $x_0+\sum_{i=1}^na_ix_i=0$. The map $H\mapsto \phi^{-1}(H)$ defines $\F_q$-isomorphisms
$U\to I(f,E)$ and $U\cap\Sec(C')\to I(f,E)'$ (recall that $I(f,E)'$ is the subset of squarefree divisors in $I(f,E)$). If $H\in (U\cap\Sec(C'))(\F_q)$ then $\Fr(H\cap C)=\Fr(\phi^{-1}(H))$ (viewed as conjugacy classes in $S_d$).

Under the assumptions of Corollary \ref{corg}, the last proposition combined with Theorem \ref{thmdec} implies that
$$\card{ \{D\in I(f,E)'|\Fr(D)=\mathcal{C}\} }=
\frac{|\mathcal{C}|}{|S_d|}q^{\dim I(f,E)}\lb 1+O_{d}\lb q^{-1/2}\rb \rb$$
for any conjugacy class $\mathcal{C}$. Note that $|I(f,E)'|=q^{\dim I(f,E)}\lb 1+O_{d}(q^{-1})\rb$, since it is in bijection with $(U\cap\Sec(C'))(\F_q)$, the complement of which is the set of $\F_q$-points of a union of a proper linear subspace and the dual variety of $C'$, which has complexity $O_d(1)$.
\end{proof}

We conclude this section with a generalization to the decomposition statistics of $m$ shifted divisors, improving the result of \cite{BaFo_2}. Let $h_1,\ldots,h_m\in\F_q(C)$ be distinct rational functions, $E$ a divisor on $C$ defined over $\F_q$.
Let $$d_i=\deg\mathrm{lcm}((h_i)_\ity,E_\ity)$$ be the generic degree of $g+h_i$ for $g\in L(E)$. We may study the distribution of
$\Th(g)=\lb\Fr((g+h_1)_0),\ldots,\Fr((g+h_m)_0)\rb$ viewed as a conjugacy class in $S_{d_1,\ldots,d_m}$ as $g$ ranges over $U(\F_q)$, where $$U=\{g\in L(E)|(g+h_i)_0\mbox{ is squarefree},1\le i\le m\}.$$

\begin{prop}\label{prophl} In the above setting assume that $\deg E\ge 2g+2,\chr\F_q\neq 2$ or $\deg E\ge 2g+3$. Let $\mathcal{C}$ be a conjugacy class of $S_{d_1,\ldots,d_m}$. Then
$$\card{ \left\{g\in U(\F_q)|\Th(g)=\mathcal{C}\right\} }=
\frac{|\mathcal{C}|}{d_1!\cdots d_m!}q^{\dim L(E)}\lb 1+O_{d}\lb q^{-1/2}\rb \rb,$$
where $d=\sum d_i$. In particular the probability that all $(g+f_i)_0$ are irreducible is
$\lb\prod_{i=1}^m d_i\rb^{-1}+O_d\lb q^{-1/2}\rb$.
\end{prop}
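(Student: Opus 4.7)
The plan is to reduce Proposition \ref{prophl} to Theorem \ref{thmdec} applied to a suitable reducible curve in $\P^n$, generalizing the single-component construction used in the proof of Corollary \ref{corg}.

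First I would set up the geometry. Let $n=\dim L(E)$ and let $f_1,\ldots,f_n$ be an $\F_q$-basis of $L(E)$. For each $i$ define $\phi_i:C\to\P^n$ by $\phi_i(P)=(h_i(P):f_1(P):\ldots:f_n(P))$, let $C_i'=\phi_i(C)$, and put $C'=\bigcup_{i=1}^m C_i'\subset\P^n$. Each $C_i'$ is absolutely irreducible of degree $d_i$, defined over $\F_q$. Projection from $(1:0:\ldots:0)$ collapses every $C_i'$ onto the common curve $\psi(C)\ss\P^{n-1}$ defined by $f_1,\ldots,f_n$ (an embedding in our range of $\deg E$, as in the proof of Proposition \ref{propg}) and identifies each $C_i'$ with the graph of $h_i$ over $\psi(C)$; since the $h_i$ are distinct, so are the $C_i'$. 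On the affine chart $U\ss\pns$ of hyperplanes $H:x_0+\sum a_kx_k=0$, the identification $H\leftrightarrow g=\sum a_kf_k\in L(E)$ pairs $H\cap C_i'$ with the divisor $(g+h_i)_0$, so $\Th(g)$ matches the Frobenius class of $H$ under the cover $\PSec(C')\to\Sec(C')$.

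The main step is to verify that $C'$ is quasireflexive. By the criterion following Proposition \ref{propquasi}, this requires each $C_i'$ to be quasireflexive and $C_i'^*\neq C_j'^*$ for $i\neq j$. The first is immediate from Proposition \ref{propg} applied to each $(C,E,h_i)$. For the second, which I expect to be the main obstacle, identify $C_i'^*\cap U$ with
$$T_i=\{g\in L(E):g+h_i\text{ has a double zero}\},$$
an irreducible variety of dimension $n-1$ swept out by the codimension-$2$ affine subspaces $T_{i,P}=\{g\in L(E):g(P)=-h_i(P),\ g'(P)=-h_i'(P)\}$ (derivatives with respect to a local parameter at $P$). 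A direct tangent-space calculation via the incidence variety $\{(P,g)\in C\times L(E):g\in T_{i,P}\}$ shows that at a generic $g\in T_{i,P_0}$ the Zariski tangent space to $T_i$ equals $L(E-P_0)$. If $T_i=T_j$, then for generic $g$ the corresponding points $P_0,Q_0\in C$ satisfy $L(E-P_0)=L(E-Q_0)$; the very-ampleness of $|E|$ (which holds under the assumed bounds on $\deg E$) forces $P_0=Q_0$, and then $g(P_0)=-h_i(P_0)=-h_j(P_0)$ at a generic $P_0\in C$ yields $h_i=h_j$, contradicting the hypothesis. Hence $C_i'^*\neq C_j'^*$, and $C'$ is quasireflexive.

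With quasireflexivity in hand, Theorem \ref{thmdec} applied to $C'$ gives
$$\card{\{H\in\Sec(C')(\F_q):\Fr(H\cap C')=\mathcal{C}\}}=\frac{|\mathcal{C}|}{d_1!\cdots d_m!}q^n\lb 1+O_d\lb q^{-1/2}\rb\rb.$$
Under $H\leftrightarrow g$, the $\F_q$-points of $U\cap\Sec(C')$ are in bijection with $U(\F_q)$, and the complement of $U\cap\Sec(C')$ in $\Sec(C')$ (the hyperplane at infinity together with the dual of $C'$) has complexity $O_d(1)$ and dimension $\le n-1$, contributing at most $O_d(q^{n-1})$ and absorbed into the error term. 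This gives the stated asymptotic with $q^n=q^{\dim L(E)}$. The ``all irreducible'' assertion follows by specializing to the conjugacy class of tuples of full cycles and using the combinatorial count of $(d_i-1)!$ full $d_i$-cycles in $S_{d_i}$, exactly as in the closing paragraph of the proof of Proposition \ref{propbh}.
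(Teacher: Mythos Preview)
Your overall reduction to Theorem \ref{thmdec} via the reducible curve $C'=\bigcup C_i'$ is exactly the paper's strategy, and your endgame (passing from $\Sec(C')$ to $L(E)$ and counting) is fine. The divergence is in how you establish that $C'$ is quasireflexive. The paper does not try to compare the dual varieties $C_i'^*$ directly; instead it observes that all the $C_i'$ project (from $(1:0:\ldots:0)$) to the single irreducible curve $C''=\psi(C)$, which was already shown to be quasireflexive in the proof of Proposition \ref{propg}, and then invokes Lemma \ref{lemproj} once for the whole of $C'$. That gives quasireflexivity of $C'$ (hence in particular $C_i'^*\neq C_j'^*$) in one stroke, in all characteristics.

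Your alternative route through the tangent space of $T_i$ works when $\chr\F_q\neq 2$, but it breaks down in characteristic $2$, which is part of the proposition. The claim that $T_{g_0}T_i=L(E-P_0)$ comes from the differential of the projection $I_i\to T_i$ being an isomorphism onto the tangent space, and that in turn needs $(g_0+h_i)''(P_0)\neq 0$. In characteristic $2$ the ordinary second derivative vanishes identically, so this projection is everywhere inseparable; the image of $dI_i$ is only $L(E-2P_0)$, and the actual tangent space of $T_i$ is \emph{not} $L(E-P_0)$. Concretely, for $C=\P^1$, $E=3\cdot\infty$ (so $\deg E=2g+3$) and $h_i=0$, one computes in characteristic $2$ that $T_i=\{ae+bc=0\}$ in coordinates $g=a+bt+ct^2+et^3$, and the tangent hyperplane at a generic point is $\{e_0\,da+c_0\,db+b_0\,dc+a_0\,de=0\}$, which is not $L(E-P_0)=\{da+t_0\,db+t_0^2\,dc+t_0^3\,de=0\}$. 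This is a manifestation of the failure of biduality for non-reflexive curves: in characteristic $2$ the tangent to $C_i'^*$ at $H$ does \emph{not} correspond to the contact point $P_0$. So your inference ``$T_i=T_j\Rightarrow L(E-P_0)=L(E-Q_0)\Rightarrow P_0=Q_0$'' does not go through in that case.

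The easiest repair is exactly what the paper does: having checked $C_i'\neq C_j'$ (your graph-of-$h_i$ argument is fine for that), apply Lemma \ref{lemproj} to $C'$ with $O=(1:0:\ldots:0)$, noting that $\pi(C')=C''$ is irreducible, quasireflexive, and (under $\deg E\ge 2g+3$) not contained in a plane. This yields quasireflexivity of $C'$ uniformly in the characteristic, and the rest of your argument then goes through unchanged.
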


\begin{proof} Let $f_1,\ldots,f_n\in\F_q(C),n=\deg E-g+1$ be a basis for $L(E)$.
Consider the rational maps $$\phi_i:C\to\P^n:\phi_i(P)=(h_i(P):f_1(P):\ldots:f_n(P)),$$ which as before can be extended to morphisms. Denote
$C_i'=\phi_i(C)$. We claim that $C_i'\neq C_j'$ for $i\neq j$. Indeed assume that
\beq\label{equality}(h_i(P):f_1(P):\ldots:f_m(P))=(h_i(Q):f_1(Q):\ldots:f_m(Q)).\eeq
Since the map $$\psi:C\to \P^{n-1}:P\mapsto(f_1(P):\ldots:f_m(P))$$ is birational, for all but finitely many $P,Q$ the equality (\ref{equality}) implies $P=Q$ and therefore $h_i(P)=h_j(P)$ which can only happen for finitely many $P$. Therefore $C_i'\cap C_j'$ is finite.

Let $\pi$ be the projection from $(1:0:\ldots:0)$ and $C''=\psi(C)=\pi(C')$. In the proof of Proposition \ref{propg} we established that $C''$ is quasireflexive and therefore by Lemma \ref{lemproj} $C'$ is quasireflexive and is not contained in a plane. Now the proof can be completed similarly to the proof of Corollary \ref{corg} by using Theorem \ref{thmdec}.
\end{proof}

{\bf Remark.} The last proposition generalizes \cite{BaFo_2}*{Theorem A}. There it was required that $q$ is odd, that $\deg E\ge 6g+3$ and some additional restrictions on $E$ and $h_1,\ldots,h_m$.

\subsection{Decomposition statistics of intersections of hyperplanes: \\proof of Corollary \ref{corint}}

Let $\F_q$ be a finite field, $\k=\fqb$ and $n,d_1,\ldots,d_n$ natural numbers. Let $H_1,\ldots,H_n$ be hypersurfaces in $\P^n$ with $\deg H_i=d_i$. Generically the intersection $H_1\cap\ldots\cap H_n$ consists of $d=d_1\cdots d_n$ points. If $H_i$ are defined over $\F_q$ then the Frobenius acts on $H_1\cap\ldots\cap H_n$ and determines a conjugacy class in $S_d$ which we denote by $\Fr(H_1\cap\ldots \cap H_n)$. We would like to study the distribution of this class as $(H_1,\ldots,H_n)$ varies over all $n$-tuples of hypersurfaces of degree $d_1,\ldots,d_n$ defined over $\F_q$ for $d$ fixed and $q\to\ity$.

Denote by $\P^{n*}_s$ the space of hypersurfaces in $\P^n$ of degree $s$. It has a natural structure of a projective space of dimension $\lb \begin{array}{cc} n+s\\ n\end{array}\rb-1$ (corresponding to the space of homogeneous polynomials of degree $d$ in $n$ variables up to a multiplicative constant). By Bertini's theorem for generic
$(H_1,\ldots,H_{n-1})\in\P^{n*}_{d_1}\times\ldots\times\P^{n*}_{d_{n-1}}$ the intersection $C=H_1\cap\ldots\cap H_{n-1}$ is a smooth irreducible curve of degree $d_1\cdots d_{n-1}$, i.e. the set
\begin{multline*}V=\left\{(H_1,\ldots,H_{n-1})\in\P^{n*}_{d_1}\times\ldots\times\P^{n*}_{d_{n-1}}:\right. \\ \left. H_1\cap\ldots\cap H_{n-1}\mbox{ irreducible curve}\vphantom{\P^{n*}_{d_{n-1}}}\right\}\end{multline*}
is nonempty and open in $\P^{n*}_{d_1}\times\ldots\times\P^{n*}_{d_{n-1}}$. One can find equations defining the complement of $V$ of degree depending only on $n,d_1,\ldots,d_n$ (not on the base field), so $\comp(V)=O_{n,d}(1)$ and therefore all but an $O_{n,d}(q^{-1})$ fraction of $(H_1,\ldots,H_{n-1})\in\prod_{i=1}^{n-1}\P^{n*}_d(\F_q)$ lie in $V(\F_q)$. Consequently it would be enough to show that for a fixed absolutely irreducible $C\ss\P^n$ the Frobenius class $\Fr(C\cap H_n),H_n\in\P^{n*}_{d_n}(\F_q)$ is equidistributed in $S_d$ up to an error of $O_{n,d}\lb q^{-1/2}\rb$ to deduce the same for $\Fr(H_1\cap\ldots\cap H_n),(H_1,\ldots,H_n)\in U(\F_q)$, where
$$U=\left\{(H_1,\ldots,H_n)\in\prod_{i=1}^m\P^{n*}_{d_i}:\abs{H_1\cap\ldots\cap H_n}=d\right\}.$$

\begin{prop} Let $C\in\P^n$ be an absolutely irreducible curve of degree $\deg C=d$ defined over $\F_q$ and $e\ge 2$ a natural number. Let $\mathcal{C}$ be a conjugacy class of $S_{de}$. Denote by $\Sec_e(C)\ss\P^{n*}_e$ the open subset of $H\in\P^{n*}_e$ that intersect $C$ transversally. Then
\begin{multline*}\abs{\{H\in\Sec_e(C)(\F_q)|\Fr(H\cap C)=\mathcal{C}\}}=\\=\frac{|\mathcal{C}|}{(de)!}
q^{\dim\P^{n*}_d}\lb 1+O_{n,d,e}\lb q^{-1/2}\rb\rb.\end{multline*}
\end{prop}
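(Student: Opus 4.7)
The approach is to reduce the proposition to Theorem \ref{thmdec} via the degree-$e$ Veronese embedding. Let $v_e : \P^n \hookrightarrow \P^N$ with $N = \binom{n+e}{n} - 1$ denote the Veronese map, and set $\tilde{C} = v_e(C) \subset \P^N$, an absolutely irreducible projective curve of degree $de$ defined over $\F_q$. Pullback along $v_e$ identifies $H^0(\P^N, \mathcal{O}(1))$ with $H^0(\P^n, \mathcal{O}(e))$, yielding an $\F_q$-isomorphism $\iota: \P^{n*}_e \xrightarrow{\sim} \P^{N*}$ which sends a degree-$e$ hypersurface $H \subset \P^n$ to the hyperplane $\tilde{H} \subset \P^N$ with $v_e^{-1}(\tilde{H}) = H$. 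Since $v_e$ is a closed immersion, the scheme-theoretic intersections $H \cap C$ and $\tilde{H} \cap \tilde{C}$ correspond canonically; hence $\iota$ restricts to an isomorphism $\Sec_e(C) \xrightarrow{\sim} \Sec(\tilde{C})$, and for $H \in \Sec_e(C)(\F_q)$ we have $\Fr(H \cap C) = \Fr(\tilde{H} \cap \tilde{C})$ as conjugacy classes in $S_{de}$.

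Granted that $\tilde{C}$ is quasireflexive, Theorem \ref{thmdec} applied to $\tilde{C}$ (single absolutely irreducible component of degree $de$) immediately produces the claimed count, noting $\dim \P^{N*} = N = \dim \P^{n*}_e$. All varieties and maps involved have complexity $O_{n,d,e}(1)$, since the Veronese map and its inverse are cut out by polynomials of degree bounded in terms of $n$ and $e$, so the implied error term is $O_{n,d,e}(q^{-1/2})$ as required.

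It therefore suffices to verify that $\tilde{C}$ is quasireflexive. Since $\tilde{C}$ is absolutely irreducible, this amounts to exhibiting a single simple tangent hyperplane, equivalently a degree-$e$ hypersurface $H \subset \P^n$ with $H|_C = 2P + Q_1 + \cdots + Q_{de-2}$ as a divisor on $C$ for pairwise distinct smooth points $P, Q_1, \ldots, Q_{de-2}$. Fix a smooth point $P \in C$ and pick a linear form $L$ with $L(P) = 0$ whose zero-hyperplane is transverse to the tangent line $T_P C$ (so that $L|_C$ has order exactly $1$ at $P$), together with a linear form $M$ with $M(P) \neq 0$. Using $e \geq 2$, the degree-$e$ form $F_0 = L^2 M^{e-2}$ then satisfies $\mathrm{ord}_P(F_0|_C) = 2$, so the linear system $\Pi \subset |\mathcal{O}_{\P^n}(e)|$ of degree-$e$ hypersurfaces of order $\geq 2$ at $P$ contains a member of order exactly $2$. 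A Bertini-type dimension count on the incidence varieties inside $\Pi$---parametrizing members with higher contact at $P$, or with a non-simple zero at some other point $Q \in C$---shows that both loci are proper closed subsets of $\Pi$. A generic member of $\Pi$ is therefore simply tangent at $P$ and meets $C$ transversally at the remaining $de-2$ points, furnishing the required simple tangent hyperplane via $\iota$.

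The main obstacle is the last quasireflexivity step, particularly in characteristic $2$, where the usual Bertini arguments need care. The key point is to bound the dimension of the doubly-tangent locus: using the extra freedom afforded by $e \geq 2$, one can independently impose order-$2$ vanishing at two distinct smooth points $P, Q \in C$ at a total cost of codimension $4$ in $|\mathcal{O}_{\P^n}(e)|$, whereas imposing simple tangency at $P$ alone costs codimension $2$; this comparison shows that the doubly-tangent sublocus has strictly smaller dimension than $\Pi$ itself, which yields the sought generic simple tangency.
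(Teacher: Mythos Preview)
Your proof is correct and follows essentially the same route as the paper: reduce via the degree-$e$ Veronese embedding to Theorem~\ref{thmdec} for the image curve, then verify quasireflexivity by a dimension count showing that a generic tangent degree-$e$ hypersurface is simple. The paper phrases the count by directly asserting $\dim B=M-1$, $\dim B_{P,Q}=M-4$, $\dim B_{P,P}\le M-3$ for $e\ge 2$, whereas you fix $P$ first and exhibit the explicit witness $L^2M^{e-2}$ before the same codimension comparison; the arguments are otherwise identical.
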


\begin{proof}

We will deduce this from Theorem \ref{thmdec}. Let $$\phi:\P^n\to\P^n_e=\P^M,M=\lb \begin{array}{cc} n+e\\ n\end{array}\rb-1$$ be the $e$-fold embedding \cite{Har81}*{\S I.2, ex. 2.12}. The space $\P^{n*}_e$ can be viewed as the dual of $\P^n_e$ as hyperplanes $h\ss\P^n_e$ correspond by $h\mapsto \phi^{-1}(h)$ to degree $e$ hypersurfaces in $\P^n$. The curve $\phi(C)$ is absolutely irreducible of degree $de$. There is a bijection
$\phi^{-1}(h)\cap C\leftrightarrow h\cap\phi(C)$ that respects the Frobenius action. We see that the proposition is equivalent to the assertion of Theorem \ref{thmdec} for the curve $\phi(C)$, which follows if we can show that $\phi(C)$ is quasireflexive. This is equivalent to showing that there is a hypersurface $H\in\P^{n*}_e$ that is a simple tangent to $C$, i.e. tangent to $C$ at a single point $P$ with $I(P,C.H)=2$ and intersecting $C$ transversally at all other points.

To this end we define
$$B=\{H\in\P^{n*}_e|H\mbox{ is tangent to }C\},$$
$$B_{P,Q}=\{H\in\P^{n*}_e|H\mbox{ is tangent to }C\mbox{ at P,Q}\},P\neq Q,$$
$$B_{P,P}=\{H\in\P^{n*}_e|I(P,H.C)>2\}.$$
It is easy to show that for $e\ge 2$ and smooth $P,Q\in C$ we have $$\dim B=M-1,\dim B_{P,Q}=M-4,\dim B_{P,P}\le M-3.$$ Note that these assertions may fail if $e=1$, even generically, because a hyperplane is tangent to $C$ at $P$ iff it contains the tangent line at $P$, which may be a line of inflection or coplanar with another tangent line. Also a generic $H\in B$ doesn't contain any singularities of $C$. From this we deduce by the same argument we used to conclude the proof of Proposition \ref{propg} that the generic $H\in B$ is a simple tangent.

\end{proof}

Corollary \ref{corint} now follows from the proposition and the preceding discussion if we assume (without loss of generality) that $d_n\ge 2$. Note that we formulated Corollary \ref{corint} in terms of defining polynomials instead of hypersurfaces, but these formulations are equivalent.

\subsection{Galois groups of polynomials with indeterminate coefficients: proof of Corollary \ref{corgal}}

Let $\k$ be a an algebraically closed field, $f_0,\ldots,f_n\in\k[t],n\ge 2$ nonzero polynomials with
$\mathrm{gcd}(f_1,\ldots,f_n)=1$ and $A_1,\ldots, A_n$ free variables. Denote $K=\k[A_1,\ldots,A_n]$. Let $L$ be the splitting field of the polynomial \begin{equation}\label{deff} F(t)=f_0(t)+\sum_{i=1}^nA_if_i(t)\in K[t].\end{equation} If $\chr\k=p$ is finite we require $F(t)\not\in K[t^p]$. The polynomial $F(t)$ is irreducible over $K$ since it is linear in each $A_i$. Denote $d=\deg F=\max\deg f_i$. Let $L$ be the splitting field of the separable polynomial $F(t)$. We would like to determine when $\Gal(L/K)$ is the full symmetric group $S_d$.

At this point we add the requirement that $f_i/f_j,0\le i,j\le n$ generate the field $\k(t)$.
Consider the rational map $\phi:\P^1\to\P^n$ defined by $$\phi((1:t))\mapsto(f_0(t):\ldots: f_n(t)).$$ It can be continued to a morphism $\P^1\to\P^n$. Our assumption that $f_i/f_j$ generate $\k(t)$ is equivalent to $\phi:\P^1\to\phi(\P^1)$ being birational. Denote $C=\phi(\P^1)$. We have $\deg C=d$.

\begin{prop}\label{propgal1} Under the above assumptions $$\Gal(L/K)\cong\Mon(\PSec(C)/\Sec(C)).$$ If $C$ is quasireflexive then $\Gal(L/K)=S_d$.\end{prop}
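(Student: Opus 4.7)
The plan is to identify $K$ with the function field $\k(\Sec(C))$ and to recognize the splitting field $L$ of $F$ over $K$ as the Galois closure of the function field extension $\k(\PSec(C))/K$. Once this is done, formula (\ref{mongal}) in the irreducible case $m=1$ immediately yields $\Gal(L/K)\cong\Mon(\PSec(C)/\Sec(C))$, and the final assertion follows from Theorem~\ref{thm1} applied to the irreducible quasireflexive curve $C$ of degree $d$.

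To set up the identification, I would view $(A_1,\ldots,A_n)$ as coordinates on the affine patch $U\subset\pns$ parametrizing hyperplanes of the form $H_{\mathbf{A}}:\{x_0+\sum_{i=1}^n A_ix_i=0\}$, so that $K=\k(A_1,\ldots,A_n)=\k(U)=\k(\Sec(C))$ (using that $U\cap\Sec(C)$ is a dense open in $\pns$). Next, for $t$ in an algebraic closure of $K$, the equation $F(t)=0$ is exactly the incidence condition $\phi((1:t))\in H_{\mathbf{A}}$. Since $\phi:\P^1\to C$ is a birational morphism, pulling back the étale cover $\PSec(C)\to\Sec(C)$ along $\phi\times\mathrm{id}:\P^1\times\Sec(C)\to C\times\Sec(C)$ produces a variety that is birational over $\Sec(C)$ to $\PSec(C)$, and whose coordinate ring over the chart $\A^1\times U$ is precisely $K[t]/(F(t))$. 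Therefore $\k(\PSec(C))=K[t]/(F(t))$ as an extension of $K$ of degree $d$, and the Galois closure of this extension coincides with the splitting field $L$ of $F$ over $K$.

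Applying (\ref{mongal}) to the étale cover $\PSec(C)\to\Sec(C)$ (case $m=1$, since $C$ and hence $\PSec(C)$ are irreducible) yields $\Gal(L/K)\cong\Mon(\PSec(C)/\Sec(C))$, and if $C$ is quasireflexive Theorem~\ref{thm1} upgrades this to $\Gal(L/K)=S_d$. The main subtlety is the identification $\k(\PSec(C))=K[t]/(F(t))$: it rests on the birationality of $\phi$ — as opposed to mere dominance, which is why the hypothesis that the $f_i/f_j$ generate $\k(t)$ is essential — since this is what ensures that the pullback along $\phi\times\mathrm{id}$ shares a function field with $\PSec(C)$ over $\Sec(C)$, and equivalently that the $d$ roots of $F$ in $\overline{K}$ correspond bijectively with the $d$ points of the generic hyperplane section $H\cap C$ via $t\mapsto\phi((1:t))$.
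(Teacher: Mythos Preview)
Your proposal is correct and follows essentially the same approach as the paper: both identify $K$ with $\k(\Sec(C))$ via the affine chart $a_0\neq 0$ on $\pns$, use the birationality of $\phi$ to identify $\k(\PSec(C))$ with $K[t]/(F(t))=K(\alpha)$ (the paper does this by writing down the explicit birational model $\{(t,A_1,\ldots,A_n):F(t)=0\}$ for $\PSec(C)$, you phrase it as a pullback along $\phi\times\mathrm{id}$), then invoke (\ref{mongal}) and Theorem~\ref{thm1}. Your remark on why the generating hypothesis on the $f_i/f_j$ is essential matches the paper's use of birationality at the same step.
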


\begin{proof} Let $x_0,\ldots,x_n$ and $a_0,\ldots,a_n$ be the coordinates in $\P^n,\pns$ respectively. Consider the rational functions $A_i=a_i/a_0\in\k\lb\pns\rb,1\le i\le n$. The functions $A_1,\ldots,A_n$ are algebraically independent so the abuse of notation is justified. The subset $\Sec(C)\ss\pns$ is open, so $\k(\Sec(C))=\k(A_1,\ldots,A_n)=K$.

Next, since $\phi$ is birational we see that a generic point $P=(x_0:\ldots:x_n)\in C$ can be specified by the unique $t\in\k$ such that $P=(f_0(t):\ldots:f_n(t))$. If $H=(a_0:\ldots:a_n)\in\pns$ is a hyperplane containing $P$ then $\sum_{i=0}^n a_ix_i=0$ which is generically equivalent to
$$f_0(t)+\sum_{i=1}^nA_i(H)f_i(t)=0.$$ We see that $\PSec(C)$ is birational to
$$\left\{(t,A_1,\ldots,A_n)\in\k^{n+1}:f_0(t)+\sum_{i=1}^nA_if_i(t)=0\right\}$$ and therefore
$\k(\PSec(C))=K(\al)$, where $\al$ is a root of $F(t)$ ($F$ is defined by (\ref{deff})). The map
$\PSec(C)\to\Sec(C)$ induces the finite extension of function fields $K(\al)/K$ and therefore by (\ref{mongal}) we have
$\Mon(\PSec(C)/\Sec(C))=\Gal(L/K)$, where $L/K$ is the Galois closure of $K(\al)/K$.

The last assertion follows from Theorem \ref{thm1}.
\end{proof}

{\bf Remark.} In the last proposition we only used Theorem \ref{thm1} with $m=1$, which is due to J. Rathmann \cite{Rat87}*{Proposition 2.1}.

\begin{prop}\label{propgal2} In the above setting assume that $\chr\k\neq 2$ and that for some $0\le i,j,k\le n$ we have $$W(f_i,f_j,f_k)=\det\left[\ba{ccc}f_i&f_j&f_k\\f_i'&f_j'&f_k'\\f_i''&f_j''&f_k''\ea\right]\neq 0.$$ Then $C=\phi(\P^1)$ is quasireflexive.\end{prop}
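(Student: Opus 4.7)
The plan is to apply Proposition \ref{propquasi}: since $\chr\k \neq 2$, quasireflexivity of $C$ is equivalent to the condition that a generic point of $C$ is \emph{not} a flex. Because $\phi\colon\P^1\to C$ is birational, a generic point of $C$ has the form $P = \phi(t_0)$ for generic $t_0$ in the affine chart of $\P^1$, and any pathology (singularity of $C$, non-immersivity of $\phi$) can be avoided for generic $t_0$.

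Next I will parametrize tangent hyperplanes linearly in the coefficient vector $a = (a_0,\ldots,a_n)$: the hyperplane $H = \{\sum_i a_i x_i = 0\}$ passes through $P = \phi(t_0)$ with $I(P, H.C)$ equal to the order of vanishing at $u = t_0$ of $g(u) = \sum_i a_i f_i(u)$. Since $\chr\k \neq 2$, this order is $\ge 3$ iff $g(t_0) = g'(t_0) = g''(t_0) = 0$, i.e.\ iff $a$ is orthogonal to each of the vectors $f(t_0), f'(t_0), f''(t_0) \in \k^{n+1}$. Hence $H$ is tangent at $P$ iff $a \perp f(t_0), f'(t_0)$, and flex-tangent iff in addition $a \perp f''(t_0)$; so $P$ is a flex iff the first two conditions already imply the third, equivalently iff the $(n+1)\times 3$ matrix with columns $f(t_0), f'(t_0), f''(t_0)$ has rank at most $2$.

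To finish I will observe that the $3\times 3$ minors of this matrix are exactly the Wronskians $W(f_i,f_j,f_k)(t_0)$ for $0\le i<j<k\le n$. By hypothesis some $W(f_i,f_j,f_k)$ is a nonzero polynomial in $t$, so at a generic $t_0$ it is nonzero, forcing that matrix to have rank $3$ (which in particular makes $f(t_0), f'(t_0)$ linearly independent and justifies the immersivity tacitly used above). Therefore $P = \phi(t_0)$ is not a flex, and Proposition \ref{propquasi} yields quasireflexivity of $C$. The one place where $\chr\k\neq 2$ is genuinely required is the equivalence ``order of vanishing $\ge 3$ iff $g, g', g''$ all vanish at $t_0$'': in characteristic $2$ even an ordinary double root already kills $g''$, which is exactly why reflexivity collapses there and the separate quasireflexivity framework of Section \ref{secref} becomes necessary. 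Apart from this characteristic subtlety, the argument reduces to linear algebra on the Wronskian matrix.
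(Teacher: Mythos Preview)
Your argument is correct and follows essentially the same route as the paper: both reduce to showing that the vectors $f(t_0),f'(t_0),f''(t_0)$ are linearly independent for generic $t_0$ (which is exactly the nonvanishing of some $3\times 3$ Wronskian minor), so that one can find a tangent hyperplane with order of contact exactly $2$, and then invoke Proposition~\ref{propquasi}. The only cosmetic difference is that the paper exhibits such a hyperplane directly, whereas you phrase the same linear-algebra step contrapositively as a rank condition characterizing flexes.
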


\begin{proof} We parametrize the affine patch of $\P^1$ with the parameter $t$. Consider once again the birational map
$\psi(t)=(f_0(t):\ldots:f_n(t))$ defined above and let $t$ be a point at which it is a local isomorphism.
Consider the vectors
\begin{eqnarray}\nonumber v_0(t)&=&(f_0(t),\ldots,f_n(t))\\
\nonumber v_1(t)&=&(f_0'(t),\ldots,f_n'(t))\\
\nonumber v_2(t)&=&(f_0''(t),\ldots,f_n''(t)).\end{eqnarray}
Our condition implies that for a generic $t$ they are linearly independent. For a hyperplane $H=(a_0:\ldots:a_n)$ the order of contact $I(\psi(t),H.C)$ is the order of vanishing of $\sum_{i=1}^na_if_i(t)$ at $t$. By our assumption that $v_0(t),v_1(t),v_2(t)$ are linearly independent there exist $a_0,\ldots,a_n$ such that
$$\sum_{i=0}^na_if_i(t)=0,\sum_{i=0}^na_if_i'(t)=0,\sum_{i=0}^na_if_i''(t)\neq 0$$
and since $\chr\k=2$ this means that the order of vanishing of $\sum_{i=1}^na_if_i(t)$ at $t$ is exactly 2, i.e. the hyperplane $H=(a_0:\ldots:a_n)$ satisfies $I(\psi(t),H.C)=2$, so $\psi(t)$ is not a flex. By Proposition \ref{propquasi} the curve $C$ is quasireflexive.
\end{proof}

\begin{proof}[Proof of Corollary \ref{corgal}]
First of all observe that we may assume $\k$ to be algebraically closed, since extending the base field can only shrink the Galois group. Now the corollary follows immediately from propositions \ref{propgal1} and \ref{propgal2}.\end{proof}

\section{Proof of Lemma \ref{lemproj}}
\label{seclem}

In the present section we work over an arbitrary algebraically closed field $\k$. Let $C\ss\P^n$ with $n\ge 3$ be a curve (possibly reducible and singular) and $O\in\P^n$ a point which is not a strange point of $C$. Denote by $\pi:\P^n\sm\{O\}\to\P^{n-1}$ the projection from $O$. Denote $C'=\pi(C)$. By our assumption on $O$ the map $\pi:C\to C'$ is generically \'etale. We make the following additional assumptions:
\begin{enumerate}\item[(i)] $C'$ is quasireflexive.
\item[(ii)] If $\chr\k=2$ no component of $C'$ is contained in a plane.
\end{enumerate}
Under these assumptions we wish to prove that $C$ is quasireflexive.

Denote by $C_1,\ldots,C_m$ the irreducible components of $C$, $C_i'=\pi(C_i)$. Note that some of the $C_i'$ may coincide.
For a smooth point $P$ on a curve in projective space denote by $L_P$ its tangent line. If $P\in C$ is such that $O\not\in L_P$ (equivalently $\pi$ is unramified at $P$) then $\pi(L_P)=L_{\pi(P)}$. A hyperplane $H$ is tangent to $C$ at $P$ iff $L_P\ss H$.

Let $H$ be a hyperplane such that $O\in H$. Then $\pi(H)\ss\P^{n-1}$ is a hyperplane. Conversely if $h\ss\P^{n-1}$ is a hyperplane then $\pi^{-1}(h)$ is a hyperplane containing $O$.

\begin{lem} Let $P\in C$ be a smooth point such that $O\not\in L_P$, $H$ a hyperplane such that $O\in H$. We have $$I(\pi(P),\pi(H).C')=I(P,H.C).$$\end{lem}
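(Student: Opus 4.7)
The approach is to reduce both intersection multiplicities to the $t$-adic order of a single power series in a local parameter at $P$, exploiting the fact that the hyperplane $H$, containing the center of projection $O$, pulls back under $\pi$ from the hyperplane $\pi(H)\subset\P^{n-1}$.

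First I would check that $\pi|_C$ is \'etale at $P$. The fibers of $\pi$ are the lines through $O$, so the kernel of the differential $d\pi_Q$ at any $Q\neq O$ is the one-dimensional subspace of $T_Q\P^n$ corresponding to the direction of $\overline{OQ}$. Hence $d\pi_P$ restricts to an injection on $T_PC=L_P$ precisely when $O\notin L_P$, which is our hypothesis; combined with the smoothness of $C$ at $P$ this yields that $\pi|_C$ is unramified, hence \'etale, at $P$. Consequently $\pi$ induces an isomorphism of $\widehat{\mathcal{O}}_{C,P}$ with the completed local ring of the branch of $C'$ at $\pi(P)$ coming from $P$.

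Next I would pick a local parameter $t$ at $P$ on $C$, giving a formal parametrization $\gamma(t)$ of $C$ near $P$ with $\gamma(0)=P$; then $\pi\circ\gamma$ is a formal parametrization of the corresponding branch of $C'$ at $\pi(P)$. Choosing affine coordinates with $O=(1:0:\cdots:0)$ so that $\pi(x_0:x_1:\cdots:x_n)=(x_1:\cdots:x_n)$, the condition $O\in H$ forces the defining linear form $f$ of $H$ to involve only the coordinates $x_1,\ldots,x_n$; hence $f=g\circ\pi$, where $g$ is the defining linear form of $\pi(H)\subset\P^{n-1}$.

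Finally, by definition $I(P,H.C)$ equals the $t$-adic valuation of $f(\gamma(t))$, while the contribution to $I(\pi(P),\pi(H).C')$ coming from the branch of $C'$ at $\pi(P)$ corresponding to $P$ equals the $t$-adic valuation of $g(\pi\circ\gamma(t))=f(\gamma(t))$. These are manifestly equal, which proves the claim. The one point that requires care is interpretational: if $\pi^{-1}(\pi(P))\cap C$ contains points other than $P$, then the equality must be read with the left-hand side taken along the branch of $C'$ at $\pi(P)$ through $P$. This is the notion of multiplicity needed in the subsequent verification of quasireflexivity in Lemma \ref{lemproj}, so no genuine obstacle arises.
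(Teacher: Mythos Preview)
Your proof is correct and follows essentially the same approach as the paper: choose coordinates with $O=(1:0:\cdots:0)$ so that $\pi$ drops the $x_0$-coordinate, observe that $O\in H$ forces the linear form of $H$ to involve only $x_1,\ldots,x_n$ (hence to factor through $\pi$), and use that $O\notin L_P$ makes $\pi|_C$ unramified at $P$ so that both intersection multiplicities are the order of vanishing of the same function at $P$. Your remark about branches is a welcome clarification that the paper leaves implicit.
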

\begin{proof} Let $(x_0:\ldots:x_n)$ be the coordinates in $\P^n$ chosen so that $O=(1:0:\ldots:0)$ and consider the rational functions $f_j\in\k(C),1\le j\le n$ defined by $f_i=x_i/x_0$. The projection $\pi$ acts by $$\pi(x_0:\ldots:x_n)=(x_1:\ldots:x_n)$$ and we will use the variables $(x_1:\ldots:x_n)$ in $\P^{n-1}$. For $P\in C$ we have $\pi(P)=(f_1(P):\ldots:f_n(P))$. Since $O\in H$ the hyperplane $H$ has an equation of the form $\sum_{i=1}^n a_ix_i=0$. The hyperplane $\pi(H)$ has the same equation. Since $O\not\in L_P$ the map $\pi:C\to C'$ is unramified at $P$ and therefore we have
$$I(P,H.C)=\mathrm{ord}_P\sum_{i=1}^na_jf_j(P)=I(\pi(P),H.C').$$
\end{proof}

\begin{cor} A generic $P\in C$ is not a flex.\end{cor}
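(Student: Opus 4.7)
The plan is to exhibit, for a generic point $P\in C$, a tangent hyperplane at $P$ with contact order exactly $2$ by pulling one back from $C'$ via the preceding lemma. Fix an irreducible component $C_i$ of $C$ and set $C_i'=\pi(C_i)$. For a generic point $P\in C_i$, $P$ is smooth, lies on no other component of $C$, and satisfies $O\notin L_P$ (since $O$ is not a strange point for $C$); in particular $\pi$ is unramified at $P$ and $\pi(P)$ is a generic point of $C_i'$.

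Next I would use the quasireflexivity hypothesis on $C'$. Since $C'$ is quasireflexive, each component $C_i'$ is quasireflexive, and by the characterization recalled in the paragraph preceding Proposition \ref{propquasi} a generic point of $C_i'$ is not a flex. Hence there exists a hyperplane $h\ss\P^{n-1}$ tangent to $C_i'$ at $\pi(P)$ with $I(\pi(P),h.C')=2$.

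I would then lift $h$ to the hyperplane $H\ss\P^n$ through $O$ obtained by reading the linear equation of $h$ in the coordinates of $\P^n$ with $x_0$ absent (where $O=(1:0:\ldots:0)$), so that $\pi(H\sm\{O\})=h$. Because $\pi(L_P)=L_{\pi(P)}\ss h$ and $O\notin L_P$, the tangent line $L_P$ is contained in $H$, so $H$ is tangent to $C$ at $P$. Applying the preceding lemma to this $P$ and $H$ yields $I(P,H.C)=I(\pi(P),h.C')=2$, exhibiting a tangent hyperplane at $P$ with simple contact, so $P$ is not a flex.

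The only step that requires any genuine care is bookkeeping: one must verify that the open conditions imposed on $P$ (smoothness, membership in a unique component of $C$, $O\notin L_P$, and $\pi(P)$ being a non-flex smooth point lying on a single component of $C'$) can be met simultaneously on each $C_i$. Since $\pi:C_i\to C_i'$ is dominant and, by the non-strange assumption on $O$, generically \'etale, the last family of conditions is the pullback under $\pi$ of a dense open subset of $C_i'$ and therefore cuts out a dense open subset of $C_i$; intersecting with the other (standard) open conditions gives a nonempty open set, which is all that is required. The characteristic $2$ hypothesis that no component of $C'$ lies in a plane is not needed at this step — it will only enter in the companion argument that a generic tangent hyperplane to $C$ is tangent at a single point.
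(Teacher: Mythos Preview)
Your proof is correct and follows exactly the same approach as the paper: pull back a tangent hyperplane $h$ at the non-flex point $\pi(P)\in C'$ to $H=\pi^{-1}(h)$ and apply the preceding lemma to get $I(P,H.C)=I(\pi(P),h.C')=2$. The paper's version is considerably terser, omitting the bookkeeping about open conditions that you spell out, but the argument is the same.
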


\begin{proof} For a generic $P\in C$ the point $\pi(C)\in C'$ is not a flex. Let $h$ be a tangent to $\pi(C)$ such that $I(\pi(P),h.C')=2$ and denote $H=\pi^{-1}(H)$. We have $I(P,H.C)=I(\pi(P),h.C')=2$, so $P$ is not a flex.\end{proof}

If $\chr\k\neq 2$ this already shows that $C$ is quasireflexive (by Proposition \ref{propg}). For $\chr\k=2$ we will also need to demonstrate that a generic tangent hyperplane to $C$ is only tangent at one point. We assume henceforth that $\chr\k=2$ and no component of $C'$ is contained in a plane.
Next we will need a few auxiliary statements.

\begin{lem}\label{lemstrange} Let $D\ss\P^n$ be a quasireflexive curve and assume that no component of $D$ is a conic. Then $D$ is not strange. \end{lem}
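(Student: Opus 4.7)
My plan is to suppose for contradiction that $D$ admits a strange point $O$, pick a component $D_i$ of $D$ (of degree $d_i$) such that every tangent line $L_P$ at a smooth $P\in D_i$ contains $O$, and derive a numerical contradiction by projecting from $O$. Consider $\pi\colon\P^n\sm\{O\}\to\P^{n-1}$. Since $O\in L_P$ at every smooth $P\in D_i$, the differential $d\pi_P$ annihilates $T_P D_i$; hence the induced morphism on normalizations $\widetilde{D_i}\to D':=\pi(D_i)$ has vanishing derivative on a dense open, so it is purely inseparable of some degree $e=p^k$ with $k\ge 1$, where $p=\mathrm{char}\,\k$. In particular it is set-theoretically injective, and the projection-from-a-point formula yields $\deg D'=(d_i-\mathrm{mult}_O D_i)/e$.

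Next I would invoke quasireflexivity to extract a simple tangent hyperplane $H$ to $D_i$: by hypothesis $H$ is tangent at a single smooth $P_0\in D_i$ with $I(P_0,H.D_i)=2$, transverse to $D$ at the remaining $d-2$ points, and $|H\cap D|=d-1$. Since $O\in L_{P_0}\ss H$, automatically $O\in H$, so $h:=\pi(H\sm\{O\})$ is a hyperplane in $\P^{n-1}$. Combining the set-theoretic injection $(H\cap D_i)\sm\{O\}\hookrightarrow h\cap D'$ with the Bezout bound $|h\cap D'|\le\deg D'$ yields the key numerical inequality. If $O\notin D_i$ one obtains $d_i-1\le d_i/e$, hence $(e-1)d_i\le e$ and $d_i\le 2$. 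If $O\in D_i$, careful bookkeeping of intersection multiplicities at $O$ forces $\mathrm{mult}_O D_i=1$ (any higher multiplicity at $O$ would contribute an extra excess beyond the $P_0$-tangency, violating $|H\cap D|=d-1$), and the analogous count gives $d_i-2\le(d_i-1)/e$, hence $(e-1)d_i\le 2e-1$ and $d_i\le 3$, with equality possible only for $p=e=2$.

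Since $d_i=1$ (a line) admits no simple tangent hyperplane and $d_i=2$ (a conic) is excluded by hypothesis, the only potentially obstructing case is $d_i=3$ in characteristic $2$ with $O\in D_i$. In this residual case $D'$ is a line in $\P^{n-1}$ and the cone $\pi^{-1}(D')\cup\{O\}$ is a two-plane containing $D_i$, so $D_i$ would have to be a plane cubic (the cuspidal cubic $y^2z=x^3$ is the model). Ruling this case out is the main obstacle of the proof, and is handled most cleanly by invoking the contextual assumption (paralleling the side hypothesis of Lemma~\ref{lemproj}) that no component of $D$ is contained in a two-dimensional plane, thereby eliminating the plane cubic configuration and completing the contradiction.
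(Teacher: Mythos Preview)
Your route through projection is quite different from the paper's. The paper gives essentially a three-line B\'ezout argument: if $S$ is strange for a component $D_i$, pick a simple tangent hyperplane $H$ to $D_i$ at $P$ (exists by quasireflexivity); since $S\in L_P\subset H$, every smooth $Q\in H\cap D_i$ with $Q\neq S$ has $L_Q=\overline{QS}\subset H$, so $H$ is tangent at $Q$, forcing $Q=P$. The paper then concludes $H\cap D_i=\{P\}$, whence $d_i=I(P,H.D_i)=2$, a conic.

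What is interesting is that your more laborious approach actually uncovers a genuine oversight. When $S$ itself lies on $D_i$ the paper's argument only yields $H\cap D_i\subset\{P,S\}$, hence $d_i\le 3$ by B\'ezout---and your residual ``plane cubic in characteristic $2$'' is not a mirage. The cuspidal cubic $Y^2Z=X^3$ over $\k$ of characteristic $2$ is strange (the tangent at every smooth affine point $(t_0^2{:}t_0^3{:}1)$ is the line $X=t_0^2Z$, and all of these pass through the smooth curve point $S=(0{:}1{:}0)$) and quasireflexive (that tangent meets the cubic in exactly the two points $P,S$ with multiplicities $2,1$), yet it is neither a line nor a conic. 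So the lemma as written is false in characteristic $2$; your instinct to borrow the ``no planar component'' side hypothesis from Lemma~\ref{lemproj} is exactly the right fix, and since the lemma is only ever invoked under that hypothesis, the application is unaffected.

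One technical point in your write-up does need care: vanishing of $d\pi$ on a dense open gives $\k(D')\subset\k(\widetilde{D_i})^p$, so $\widetilde{D_i}\to D'$ is inseparable, but not automatically \emph{purely} inseparable (think of $t\mapsto t^{2p}$, which has identically zero differential yet separable degree $2$), and hence not automatically set-theoretically injective. This is easily bypassed, though: once you know the simple tangent $H$ contains $S$, the tangency observation above already forces $H\cap D_i\subset\{P,S\}$ and $d_i\le 3$ directly, with no projection needed.
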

\begin{proof} Assume to the contrary that for some component $D_i$ of $D$ all its tangent lines meet at a point $S$. Then any hyperplane containing $S$ is tangent at every point of $H\cap D_i$. On the other hand since $D$ is quasireflexive, there exists a tangent hyperplane $H$ to $D_i$ which is tangent at a single point $P$ with $I(P,H.D_i)=2$. Consequently we have $H\cap D_i=\{P\}$. Now by Bezout's theorem we have $\deg D_i=I(P,H.D_i)=2$ and $D_i$ is a conic, contrary to assumption.\end{proof}

\begin{lem}\label{lem0} Let $D\ss\P^n$ be an irreducible curve not contained in a plane. Assume that for all smooth $P,Q\in D$ the tangents $L_P,L_Q$ are coplanar. Then $D$ is strange.\end{lem}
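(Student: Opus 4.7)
The plan is to argue by contradiction: assuming $D$ is not strange, I will show that all tangent lines of $D$ lie in a single plane $\Pi$, which forces $D \subset \Pi$ and contradicts the hypothesis that $D$ is not planar. The key observation is that two distinct lines in $\P^n$ are coplanar iff they meet, so our hypothesis says that any two tangent lines of $D$ have a common point.

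First I would establish the following purely combinatorial fact about families of lines: if $\ell_1, \ell_2, \ell_3$ are three distinct pairwise meeting lines in $\P^n$, then either they are concurrent at a single point or they lie in a common plane. This is straightforward: if they are not concurrent, then the three intersection points $P_{ij}=\ell_i\cap\ell_j$ are not all equal, so at least two of them are distinct; these are two distinct points on one of the $\ell_i$, and together with the third intersection point they span a plane containing all of $\ell_1,\ell_2,\ell_3$.

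Next, suppose for contradiction that $D$ is not strange. Pick two smooth points $P_1,P_2$ on $D$ with tangents $L_1,L_2$ meeting at a point $X$. Since $D$ is not strange, not every tangent line passes through $X$, so there is a smooth point $P_3$ whose tangent $L_3$ avoids $X$. Then $L_3\cap L_1$ and $L_3\cap L_2$ are both distinct from $X$ (and hence from each other, since otherwise they would lie in $L_1\cap L_2=\{X\}$), so $L_1,L_2,L_3$ are pairwise meeting but not concurrent. By the combinatorial fact, they lie in a common plane $\Pi$, and moreover they are not concurrent in $\Pi$.

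Now I claim every tangent line $L_Q$ to $D$ at a smooth point $Q$ lies in $\Pi$. Indeed, $L_Q$ must meet each of $L_1,L_2,L_3$ by hypothesis. If $L_Q$ were not contained in $\Pi$, it would meet $\Pi$ in a single point, forcing $L_Q\cap L_1=L_Q\cap L_2=L_Q\cap L_3$ to be this common point; but then $L_1,L_2,L_3$ would all pass through this point, contradicting their non-concurrence. Since every smooth point of $D$ lies on its own tangent line and smooth points are dense in $D$, we conclude $D\subset\Pi$, a contradiction. The one step requiring care is guaranteeing the existence of a tangent $L_3$ avoiding $X$, but this follows directly from the non-strangeness assumption since $L_3\ni X$ for all $P_3$ would make every tangent pass through $X$.
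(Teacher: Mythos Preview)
Your argument is correct. The paper does not give its own proof of this lemma: it simply cites \cite{Har77}*{Proposition IV.3.8} (stated there for $\P^3$) and remarks that the argument works in any dimension. What you have written is precisely that classical incidence-geometric argument, spelled out in full, so the approaches coincide.

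One small point worth making explicit: at the very start you pick $L_1,L_2$ ``meeting at a point $X$'', which presupposes $L_1\neq L_2$. This is harmless, since if all tangent lines to $D$ coincided then $D$ would be a line, hence planar; but it is worth a half-sentence. Everything else---the trichotomy for three pairwise meeting lines, the choice of $L_3$ avoiding $X$ via non-strangeness, and the ``a line not in $\Pi$ meets $\Pi$ in at most one point'' step forcing concurrence---is clean and complete.
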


\begin{proof} This is stated for $M=3$ in \cite{Har77}*{Proposition 3.8} but the proof there works in any dimension.\end{proof}

\begin{lem}\label{lem1} Let $D\ss\P^n,n\ge 2$ be an irreducible curve such that for some component $D_i$ a generic $H\in D_i^*$ is tangent to $D$ at more than one point. Then one of the following holds:
\begin{enumerate}
\item[(i)] For a generic $P\in D_i$ there exists $Q\in D,Q\neq P$ such that $L_P=L_Q$.
\item[(ii)] For some (possibly identical) component $D_j$ and generic $P\in D_i,Q\in D_j$ the tangents $L_P,L_Q$ are coplanar.
\end{enumerate}
Further, if $D$ is irreducible and not strange then (i) holds.
\end{lem}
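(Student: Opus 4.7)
The plan is to form an incidence variety of pairs of distinct tangency points sharing a common tangent hyperplane, and analyze it by dimension counting according to the relative position of $L_P$ and $L_Q$. First I consider the conormal-like variety
\[T_i=\{(P,H)\in D_i\times\pns:P\mbox{ a smooth point of }D_i,\ L_P\ss H\},\]
which is irreducible of dimension $n-1$ and dominates $D_i^*$. Let $X\ss T_i$ consist of those $(P,H)$ for which $H$ is tangent to $D$ at some $Q\neq P$; by hypothesis the projection $X\to D_i^*$ is dominant, so $\dim X\ge n-1$ and therefore $X$ is dense in $T_i$. For each component $D_j$ of $D$ I set
\[Y_j=\{(P,H,Q)\in T_i\times D_j:Q\mbox{ smooth on }D_j,\ Q\neq P,\ L_Q\ss H\}.\]
Since only finitely many $Q\in D_j$ satisfy $L_Q\ss H$ for a given $H$, the projection $Y_j\to T_i$ has finite fibers, and $\bigcup_j Y_j$ surjects onto $X$. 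Hence for some $j$ the map $Y_j\to T_i$ is dominant and $\dim Y_j=n-1$; fix such a $j$.

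Next I would project $\pi:Y_j\to D_i\times D_j$, $(P,H,Q)\mapsto(P,Q)$, and let $Z$ be the Zariski closure of the image. The fiber of $\pi$ over $(P,Q)$ is the linear space of hyperplanes containing $L_P\cup L_Q$; its dimension equals $n-2$, $n-3$, or $n-4$ depending on whether $L_P=L_Q$, the lines $L_P,L_Q$ are coplanar but unequal, or they are skew. From the relation $\dim Y_j=\dim Z+(\mbox{generic fiber dim})=n-1$, the skew case would force $\dim Z=3>2\ge\dim(D_i\times D_j)$, which is impossible. In the equal-lines case $\dim Z=1$, and since the composite $Y_j\to T_i\to D_i$ is dominant, so is $Z\to D_i$; both are irreducible of dimension $1$, so for generic $P\in D_i$ I obtain $Q\in D_j$, $Q\neq P$, with $L_P=L_Q$, which is (i). In the coplanar-unequal case $\dim Z=2=\dim(D_i\times D_j)$ forces $Z=D_i\times D_j$, which is (ii).

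For the additional claim, assume $D$ is irreducible and not strange, so $D_i=D_j=D$. Case (ii) would say that generic $P,Q\in D$ have coplanar tangents; if $D$ is not contained in any plane, Lemma \ref{lem0} then forces $D$ to be strange, contradicting the hypothesis. If instead $D$ is contained in a plane $\Pi$, a generic tangent hyperplane $H$ does not contain $\Pi$, so $H\cap\Pi$ is a line equal to $L_P$ for the tangency point $P$; any further tangency point $Q$ satisfies $L_Q\ss H\cap\Pi=L_P$ (using $L_Q\ss\Pi$ since $D\ss\Pi$), forcing $L_Q=L_P$ and placing us directly in the equal-lines branch, so (i) holds. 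The main obstacle is the uniform dimension bookkeeping across the three line configurations; once the crude bound $\dim(D_i\times D_j)\le 2$ rules out the skew case, the two remaining cases correspond exactly to the alternatives (i) and (ii).
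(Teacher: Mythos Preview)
Your proof is correct and follows essentially the same approach as the paper's: both form an incidence variety of triples $(P,H,Q)$ with $L_P,L_Q\subseteq H$, observe it has dimension $n-1$ via dominance onto $D_i^*$, and then analyze the projection to $D_i\times D$ using the trichotomy of fiber dimensions $n-2,\,n-3,\,n-4$ according to whether $L_P=L_Q$, coplanar-unequal, or skew. The only organizational differences are that you fix the component $D_j$ before the dimension count and treat the planar case for the final assertion directly, whereas the paper reduces at the outset to $D$ not contained in a hyperplane; one small point worth making explicit is that you should pass to an irreducible component of $Y_j$ dominating $T_i$ before invoking the relation $\dim Y_j=\dim Z+(\text{generic fiber dimension})$.
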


\begin{proof} We may assume that $D$ is not contained in a hyperplane, otherwise we can restrict the ambient space to this hyperplane. If $n=2$ then our assumption is equivalent to (i), so we may assume $n\ge 3$ and then $D$ is not contained in a plane. By assumption a generic $H\in D_i^*$ contains $L_P,L_Q$ for some $P\in D_i,Q\in D,P\neq Q$.
Consider the variety $$V=\{(H,P,Q)\in D_i^*\times D_i\times D|L_P,L_Q\ss H,P\neq Q\}$$ and the
projection map $\al:V\to D_i\times D$. By assumption the projection $V\to D_i^*:(H,P,Q)\mapsto H$ is dominant, so $\dim V=n-1$.

For smooth $P\in D_i,Q\in D$ with $P\neq Q$ we have
$$\dim\al^{-1}(P,Q)=\left[\ba{ll}n-2,&L_P=L_Q,\\n-3,&L_P,L_Q\mbox{ are coplanar},\\
n-4,& L_P\cap L_Q=\emptyset.\ea\right.$$
If property (i) doesn't hold then $\dim\al^{-1}(P,Q)=n-2$ only for finitely many pairs $P\in D_i,Q\in D$. Consequently the only way we could have $\dim V=n-1$ is if for some component $D_j$ and every smooth $P\in D_i,Q\in D_j$ the tangents $L_P,L_Q$ are coplanar, so (ii) holds.

The last assertion of the lemma follows from Lemma \ref{lem0} (by the assumptions we made in the beginning of the proof the curve $D$ is not contained in a plane).
\end{proof}

Now we go back to our curves $C,C'$. By Lemma \ref{lemstrange} the curve $C'$ is not strange (by assumption no component of $C'$ can be a conic since it does not lie on a plane) and therefore $C$ is not strange. Indeed if the all tangent lines to some component $C_i$ meet at a point $S$ we must have $S\neq O$ (since $O$ is not a strange point for $C$) and then the tangent lines to $C_i'$ meet at $\pi(S)$.

Next we claim that $C$ cannot satisfy assertion (i) in Lemma \ref{lem1}. Indeed assume that for a generic point $P\in C_i$ there exists a $Q\in C,Q\neq P$ such that $L_P=L_Q$. The point $O$ can lie on $L_P$ only for finitely many $P$, so generically $\pi(P)\neq\pi(Q)$ but $L_{\pi(P)}=L_{\pi(Q)}$. Thus the generic tangent line to $C_i'$ is tangent to $C'$ at another point. This implies the same for generic tangent hyperplanes, contrary to assumption.

Now assume by way of contradiction that a generic tangent hyperplane to some component $C_i$ is tangent to $C$ at more than one point. Since assertion (i) in Lemma \ref{lem1} cannot hold for $C$, it must satisfy assertion (ii), i.e. there is a component $C_j$ (possibly $j=i$) such that for every smooth $P\in C_i,Q\in C_j$ the lines $L_P,L_Q$ are coplanar. By projection the same is true for $C_i',C_j'$. If $C_i'=C_j'$ this is impossible by Lemma \ref{lem0} since by assumption $C_i'$ is not contained in a plane and we have shown that it is not strange.

If $C_i'\neq C_j'$ we will show directly that there is a hyperplane $H\ss\P^n$ tangent to $C_i$ but not to $C_j$. If it happens that $O\in C$ and is a singular point then it can have several (but finitely many) tangent lines $T_1,\ldots,T_n$. Since $C'$ is quasireflexive we may find a hyperplane $h\ss\P^{n-1}$ such than $h\in C_i^*\sm C_j^*$ and $h$ does not contain any of the points $\pi(T_k)$ or any points of $C$ over which $\pi$ is ramified. Then $H=\pi^{-1}(h)$ is tangent to $C_i$ but not to $C_j$. This concludes the proof.

{\bf Remark.} If $\chr\k=2$ the requirement that $C'$ has no component which lies on a plane cannot be dropped. For example consider the curve $C$ with affine model $$\{(t:t^2:t^4)|t\in\k\}\in\A^3$$ and the projection from $(0:0:0:1)$ which acts by $(t,t^2,t^4)\mapsto (t,t^2)$. Its image $C'=\pi(C)$ is a conic which is quasireflexive, but $C$ is not. Indeed the generic tangent hyperplane $a_0+a_1x_1+a_2x_2+x_3=0$ satisfies $a_1=0$ and is tangent at two distinct points, since $a_0+a_1t+a_2t^2+a_4t^4=0$ has a double root iff $a_1=0$, in which case it has (generically) two double roots (compare with the proof of Lemma \ref{propnormal}). We do not know whether in all such examples one of the components of $C'$ must be a conic.

\bibliography{mybib}
\bibliographystyle{amsrefs}

\end{document}